\documentclass[10pt,a4paper]{article}
\usepackage[utf8]{inputenc}
\usepackage[T1]{fontenc}
\usepackage{amsmath,amssymb,amsthm,mathtools}
\usepackage{csquotes}
\usepackage{geometry}
\usepackage{bm}
\usepackage{hyperref}
\usepackage{algorithm}        
\usepackage{algpseudocode}    
\floatname{algorithm}{Scheme} 
\usepackage{placeins} 
\counterwithin{algorithm}{section}
\usepackage{enumitem}
\usepackage{subcaption}
\usepackage[T1]{fontenc}   
\usepackage[utf8]{inputenc}
\usepackage{afterpage}
\usepackage[T1]{fontenc}
\usepackage{microtype}
\usepackage{caption}
\usepackage{noto}
\geometry{left=2.5cm,right=1.5cm,top=1.5cm,bottom=2cm}
\numberwithin{equation}{section}
\theoremstyle{plain}
\newtheorem{theorem}{Theorem}[section]
\newtheorem{lemma}[theorem]{Lemma}

\theoremstyle{definition}

\newtheorem{remark}{Remark}[section]


\newcommand{\E}{\mathbb{E}}

\newcommand{\norm}[1]{\lVert #1 \rVert}

\title{Higher order numerical schemes for SPDEs with {\em additive} Noise}
\author{Abhishek Chaudhary, Andreas Prohl}
\begin{document}
\maketitle
\medskip
	\centerline{Mathematisches Institut Universität Tübingen, Auf der Morgenstelle 10, 72076 Tübingen, Germany}\centerline{chaudhary@na.uni-tuebingen.de, prohl@na.uni-tuebingen.de}
\begin{abstract}
We present high-order numerical schemes for linear stochastic heat and wave equations with Dirichlet boundary conditions, driven by {\em additive} noise. Standard Euler schemes for SPDEs are limited to an order convergence between $1/2$ and $1$ due to the low temporal regularity of noise. For the stochastic heat equation, a {\em modified Crank-Nicolson scheme} with proper numerical quadrature rule for the noise term in its reformulation as {\em random} PDE achieves a strong convergence rate of $3/2$. For the stochastic wave equation with {\em additive} noise a corresponding approach leads to a scheme which is of order 2.
\end{abstract} 
\noindent
	
	{\bf Keywords:} stochastic wave equation; stochastic heat equation; additive noise; Wiener process; Crank-Nicolson scheme; Euler-Maruyama scheme. 
\section{Introduction}
\label{sec:intro}

Stochastic partial differential equations (SPDEs) provide robust mathematical frameworks for modeling complex systems influenced by {\em randomness} across fields such as physics, engineering, finance, and biology. These equations describe phenomena like heat diffusion under {\em random} forcing, wave propagation with stochastic perturbations, financial models with spatial correlations, and biological processes affected by environmental noise. In this paper, we focus on two fundamental SPDEs: the linear parabolic stochastic heat equation and the stochastic wave equation, both driven by {\em additive} noise and defined on a smooth bounded domain $D \subset \mathbb{R}^d$ with Dirichlet boundary conditions.

A fundamental challenge in the numerical analysis of SPDEs stems from the limited temporal regularity of the driving Wiener process which is only Hölder continuous of order at most $1/2$ in time. This inherent roughness directly constrains the strong convergence rates attainable by a standard numerical analysis of temporal discretizations --- as is shortly evidenced in Remark~\ref{remark1.1}. For instance, typical error analyses for the classical Euler--Maruyama scheme for parabolic SPDEs typically lead to a strong order of convergence of at most $1/2$. An improvement is obtained in \cite{BreitProhl2023IMA, BreitProhlAllenCahn2024}, where an order of convergence $1$ is established for an implicit Euler discretization of SPDEs with colored \emph{additive} noise. This result is achieved by employing a transformation technique that reformulates the original SPDE into a \emph{random} PDE. In this work, we use this idea as an {\em algorithmic} tool in first line to  develop high-order numerical schemes for both, the stochastic heat and wave equations driven by additive noise.

\subsection{Stochastic Heat Equation}

Let $D\subset\mathbb{R}^d$ be smooth, bounded domain, and $T>0$. We consider the stochastic heat equation driven by {\em additive} noise:
\begin{equation}\label{eq:heat}
\begin{cases}
\mathrm{d}X(t)+A X(t)\,\mathrm{d}t =\Phi\,\mathrm{d}W(t), &\text{in}\,(0,T]\times D,\\[4pt]
X=0, & \text{on}\,(0,T]\times \partial D,\\[4pt]
X(0)=X_0, & \text{in}\,D,
\end{cases}
\end{equation}
where $W$ is an $\mathbb{R}^m$-valued Wiener process, the initial datum $X_0:D\to\mathbb{R}$, $\Phi:D\to\mathbb{R}^m$ is the {\em additive} noise coefficient, and $A:\mathbb{H}^2(D)\cap\mathbb{H}_0^1(D)\to\mathbb{L}^2(D)$ is a second-order linear uniformly elliptic operator of the form 
\begin{align}\label{operator} A=\sum_{i,j=1}^{d}a_{i,j}(x)\partial_{x_i,x_j}+\sum_{i=1}^d b_i(x)\partial_{x_i}+ c(x),
\end{align}
with smooth enough coefficients $a_{i,j}, b_i, c$ for all $i,j=1,\dots,d$\,; see, {\em e.g.} \cite[Sec. 3.1]{Lunardi1995}. A standard approach to time-discretization of SPDE~\eqref{eq:heat} is the implicit Euler-Maruyama scheme:
\begin{equation}\label{eq:implicit_euler_discrete}
\begin{cases}
\displaystyle X_{j+1}-X_j+ \tau AX_{j+1}= \Phi\Delta_{j+1}W, & j=0,\dots,N-1,\\[4pt]
X(0)=X_0,\\[4pt]
X_j\big|_{\partial D}=0, \qquad j=1,\dots,N,
\end{cases}
\end{equation}
with a uniform time grid $t_j = j\tau$, $j=0,1,\dots,N$, $\tau = \frac{T}{N}$ and the Wiener increment $\Delta_{j+1}W=W(t_{j+1})-W(t_{j})$. Due to the low temporal regularity of the noise, typical proofs of strong convergence for \eqref{eq:implicit_euler_discrete} show a rate of $1/2$, see {\em e.g.} \cite{Yan2005}, a limited order result that is shortly evidenced in the following remark.  
\begin{remark}\label{remark1.1}
Let \( e_j = X(t_j) - X_j \), where $\{X_j;1\le j\le N\}$ solves the scheme~\eqref{eq:implicit_euler_discrete}. In integrated form on $[t_j, t_{j+1}]$, the first equation in ~\eqref{eq:heat} rea{d}s as
\[
X(t_{j+1}) - X(t_j) + \int_{t_j}^{t_{j+1}} A X(s) \,{\rm d}s = \Phi \Delta_{j+1}W.
\]
Taking differences of this equation and (\ref{eq:implicit_euler_discrete}) amounts to
\begin{align}\label{today05}
    e_{j+1} - e_{j} + \int_{t_j}^{t_{j+1}} A \bigl[ X(s) -X_{j+1}\bigr] \, {\rm d}s = 0.
\end{align}
The order limiting term is now the integrand, since the solution $\{X(t); t\in[0,T]\}$ to \eqref{eq:heat} is known to be H\"older continuous in time up to $1/2$.
\end{remark}
Different ansatzes to ours given below were considered in the last decades to construct a scheme of (improved) strong order $1$, which employs the {\em explicit} knowledge of the related semigroup for \eqref{operator} to approximately solve \eqref{eq:heat}. A particular example where eigenvalues and -functions  are explicitly known is $A = -\Delta$ on the torus to use the related semigroup for an algorithm; see {\em e.g.} \cite{D1} and the cited references there, as well as how to herewith set up  the exponential Euler method in this context to settle strong order $1$. Despite its elegant analysis, such algorithms are of limited use in practice since their related characterizations are not available for {\em general} $A$ in \eqref{operator}, acting on {\em general} domains $D$. We therefore here aim to construct implementable higher-order methods that may be
used to approximate the solution of \eqref{eq:heat}  for general data settings — {\em i.e.}, general $A$ from \eqref{operator}, as well as general  smoothly bounded domains $D$.

\noindent
To accomplish this goal, we take motivation from the works \cite{BreitProhl2023IMA, BreitProhlAllenCahn2024}, and consider instead of the solution $X$ from \eqref{eq:heat} the transformation 
\begin{align}\label{z}
    u = X - \Phi W,
\end{align}and therefore convert the SPDE \eqref{eq:heat} into a {\em random} PDE:
\begin{equation}\label{eq:heat-uPDE}
\begin{cases}
u_t(t)+A u(t)=-A\bigl[\Phi W(t)\bigr], &\text{in}\,(0,T]\times D,\\[4pt]
u=0, &\text{on}\,(0,T]\times \partial D,\\[4pt]
u(0)=X_0, & \text{in}\,D,
\end{cases}
\end{equation}
where $u_t:=\partial_t u$. It is known that the solution $u$ of \eqref{eq:heat-uPDE} now {\em does} possess temporal derivatives; in fact, it can be shown that $u_t\in \mathbb{L}_{\mathbb{F}}^2(\Omega;\mathbb{L}^2([0,T];\mathbb{L}^2(D))$, provided $X_0\in \mathbb{H}_0^1(D)$ and $\Phi\in (\mathbb{H}^2(D)\cap \mathbb{H}_0^1(D))^m$; see \eqref{today03}. This is the guiding motivation to verify order $1$ for \eqref{eq:implicit_euler_discrete} in the context of more regular data $X_0\in\mathbb{H}_0^1(D)\cap\mathbb{H}^2(D)$ and $\Phi\in(\mathbb{H}_0^1(D)\cap\mathbb{H}^2(D))^m$ with \ compatibility condition $A\Phi\in (\mathbb{H}_0^1(D))^m$. To verify this result in \cite{BreitProhl2023IMA, BreitProhlAllenCahn2024} — and instead of starting with \eqref{eq:heat} directly --- we consider first implicit Euler scheme for \eqref{eq:heat-uPDE} which reads as
\begin{equation}\label{eq:heat-uCN}
\begin{cases}
\displaystyle U_{j+1} - U_j + \tau A U_{j+1}=-\tau A \bigl[\Phi W(t_{j+1})\bigr], & j=0,\dots,N-1,\\[8pt]
U^0 = X_0,\\[4pt]
U_j\big|_{\partial D}=0, \qquad j=1,\dots,N,
\end{cases}
\end{equation}
and exploit improved the temporal regularity for the solution $u$ of \eqref{eq:heat-uPDE} to verify
\begin{align}\label{eq:heat-error}
\sup_{1\leq j\leq N} \mathbb{E}\left[\|u(t_j) - U_j\|_{\mathbb{L}^2(D)}^2\right] \leq C\tau^2.
\end{align}
Afterwards, we revert the transformation procedure on a discrete level, by defining $X^j:=U_j+\Phi W(t_j)$ as approximate for $X(t_j)$ from \eqref{eq:heat}. In fact, it can be immediately seen that $X^j=X_j$, such that we have
\begin{align}\label{0001}
\sup_{1\leq j\leq N} \mathbb{E}\left[\|X(t_j) - X_j\|_{\mathbb{L}^2(D)}^2\right]= \sup_{1\leq j\leq N} \mathbb{E}\bigl[ \Vert u(t_j) - U_j\Vert^2_{\mathbb{L}^2(D)}\bigr] \leq C\tau^2 .
\end{align}
This convergence proof of rate $1$ for implicit scheme $\eqref{eq:implicit_euler_discrete}$ via \eqref{eq:heat-uPDE} --and \eqref{eq:heat-uCN} -- in the presence of regular data $X_0$ and $\Phi$ nourishes the hope to construct even higher order methods since the time regularity of the solution $u$ to {\em random} PDE \eqref{eq:heat-uPDE} is even higher, {\em i.e.},
\begin{align}\label{012}
u \in C^{1,{1}/{2}}\big([0,T]; \mathbb{L}^2(\Omega; \mathbb{H}_0^1(D))\big),
\end{align}
if data $X_0\in \mathbb{H}^4(D)\cap\mathbb{H}_0^1(D)$ and $\Phi\in \mathbb{H}^3(D)\cap\mathbb{H}_0^1(D)$ with compatibility conditions $AX_0\in \mathbb{H}_0^1(D)$ and $A\Phi\in(\mathbb{H}_0^1(D))^m$; see \eqref{eq:heat-regularity}. This result for the solution $u$ of \eqref{eq:heat-uPDE} again rests on regularity theory for deterministic PDEs with temporal H\"older regularity of the right-hand side; see \cite[Thm. 4.3.1]{Lunardi1995}. For this purpose, we propose the new {\em modified Crank--Nicolson Scheme}~\ref{eq:heat-schemeCN-SPDE} for SPDE \eqref{eq:heat} which is a strong convergence rate of $3/2$; see Theorem~\ref{thm:main}.
\subsection{Stochastic wave equation}
Let $D\subset\mathbb{R}^d$ be smooth, bounded domain, and $T>0$. We consider the stochastic wave equation driven by {\em additive} noise:
\begin{equation}\label{eq:wave}
\begin{cases}
\displaystyle  {\rm d}X_t(t) + AX(t)\,{\rm d}t = \Phi\,{\rm d}W(t), 
& \text{in}\,(0,T]\times D,\\[6pt]
X(0)=X_0,\quad  X_t(0)=X_1, & \text{in}\,D,\\[6pt]
X(t)=0, &\text{on}\,[0,T]\times\partial D,
\end{cases}
\end{equation}
where $X$ and $X_t:=\partial_t X$ denote displacement and velocity, and $X_0$ and $X_1$ are given initial data; again, $A$ is a second-order linear uniformly elliptic operator from \eqref{operator}. Next to the (explicit) Euler method, first numerical schemes proposed for \eqref{eq:wave} also use the explicit knowledge of the related semigroup for $A = -\Delta$ in $D=(0,1)$ to construct methods of order $1$; see \cite{C1, P1} for surveys of the relevant literature. A different viewpoint takes \cite{P1} where a numerical scheme of strong order $1.5$ is given for the stochastic wave equation driven by a multiplicative noise, which is applicable for general $A$ from \eqref{operator} and smooth, bounded domains $D \subset {\mathbb R}^d$.  To even construct a higher order approximations for SPDE~\ref{eq:wave}, we use another transformation than \eqref{z} here, which is 
\begin{align}\label{today11} 
u(t) = X(t)- \int_0^t\Phi W(s) \, \mathrm{d}s,
\end{align}
to reformulate \eqref{eq:wave} as a {\em random} PDE:
\begin{equation}\label{eq:wave-transformed}
\begin{cases}
\displaystyle u_{tt}(t)+Au(t) =- \displaystyle\int_{0}^{t} A\bigl[\Phi W(s)\bigr]\,\mathrm{d}s,
&\text{in}\,(0,T]\times D,\\[8pt]
u(t,x)=0, & \text{on}\,[0,T]\times\partial D,\\[6pt]
u(0)=X_0,\quad  u_t(0)=X_1, &\text{in}\,D.
\end{cases}
\end{equation}
Similar to \eqref{012}, the solution $u$ to {\em random} PDE~\eqref{eq:wave-transformed} ensures (see~\eqref{eq: wave-regularity}) 
\begin{align}\label{0909} u\in C^{2}([0,T];\mathbb{L}^2(\Omega;\mathbb{H}^2(D)),\qquad\text{and}\qquad u_t\in C^2([0,T];\mathbb{L}^2(\Omega; \mathbb{H}^1(D))).
\end{align}
If compared to \eqref{012}, we observe improved temporal regularity of the solution of $\eqref{eq:wave-transformed}$, which is why we may hope for an improved rate of convergence for the new {\em modified Crank--Nicolson} Scheme~\ref{alg:wave-modified-detailed} whose construction is detailed in Section~\ref{sec:wave}. In fact we prove convergence with rate $2$ for iterates which eventually approximate the solution $X$ of $\eqref{eq:wave}$; see Theorem~\ref{thm:wave-main-precise}. 
\subsection{Numerical simulations}

To evidence higher convergence rates for the modified Crank--Nicolson (MCN) schemes for the stochastic heat equation \eqref{eq:heat} and the stochastic wave equation \eqref{eq:wave}, we conduct numerical experiments on the domain $D = (0,1)$ with homogeneous Dirichlet boundary conditions; see Figure~\ref{fig:convergence_rates}. The noise coefficient is $\Phi(x) = \sin(2\pi x) +\sin(3 \pi x)$ with $m=1$. Initial conditions are $X(0,x) = \sin(\pi x)$ for both, with $X_t(0,x) = 0$ for the wave equation. Simulations are performed over $[0,T]$ with $T=1$, by  using a uniform spatial grid of $K=40$ points (spatial step $h=1/(K+1)$) and time steps $\tau = T/N$ for $N \in \{4, 8, 16, 32, 64, 128, 256, 512, 1024\}$. Errors are computed with the help of $M=1000$ Monte Carlo realizations, with Wiener paths approximated on a fine grid of $1024\times 1024$ steps. For the stochastic {heat equation}\eqref{eq:heat}, the exact solution is for all $t\in[0,T],$
\begin{align*}
X(t,x) &= e^{-\pi^2 t} \sin(\pi x) + \int_0^t e^{-4\pi^2 (t-s)}\sin(2\pi x)\, {\rm d}W(s) + \int_0^t e^{-9\pi^2 (t-s)}\sin(3\pi x)\,{\rm d}W(s) .
\end{align*}
We define $$\mathrm{Error}=\sqrt{\mathbb{E}\big[\|X(T)-X_{T}\|_{\mathbb{L}^2(D)}^2\big]}.$$
We compare the MCN Scheme~\ref{eq:heat-schemeCN-SPDE} against a Euler--Maruyama (EM) scheme~\eqref{eq:implicit_euler_discrete}. The rates of convergence are approximately $1.5$ for MCN Scheme~\ref{eq:heat-schemeCN-SPDE} and $1$ for EM scheme~\eqref{eq:implicit_euler_discrete}; see Figure~\eqref{fig:heat_convergence}. 

For the stochastic {wave equation} \eqref{eq:wave}, due to the complexity of the exact solution, we use a reference solution with $N_{\text{ref}}=4096$. The rate of convergence is approximately $2$ for the MCN Scheme~\ref{alg:wave-modified-detailed}; see Figure~\eqref{fig:wave_convergence}. Here we define 
$$\mathrm{Error}=\sqrt{\mathbb{E}\big[\|\nabla(X(T)-X_{T})\|_{\mathbb{L}^2(D)}^2\big]}\qquad\text{or}\qquad \sqrt{\mathbb{E}\big[\|X_t(T)-Y_{T}\|_{\mathbb{L}^2(D)}^2\big]}.$$
\noindent
\begin{figure}[htbp]
    \centering
    \begin{subfigure}{0.49\textwidth}
        \centering
        \includegraphics[width=\textwidth]{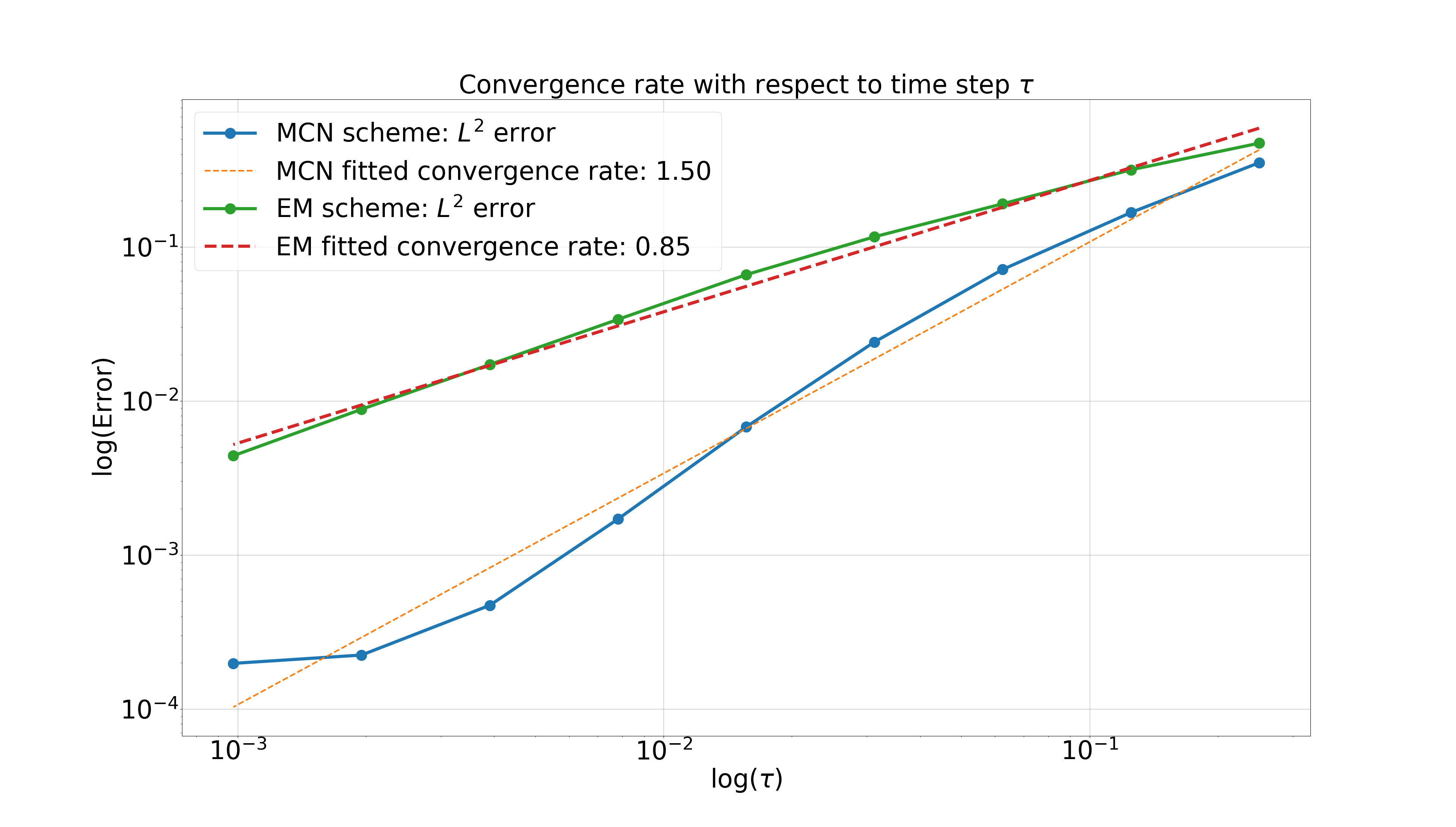}
        \caption{The MCN Scheme~\ref{eq:heat-schemeCN-SPDE} achieves a rate of approximately 1.5, while the EM scheme~\eqref{eq:implicit_euler_discrete} yields a rate of 0.85 for the stochastic heat equation~\eqref{eq:heat}.}
        \label{fig:heat_convergence}
    \end{subfigure}
    \hfill
    \begin{subfigure}{0.49\textwidth}
        \centering
        \includegraphics[width=\textwidth]{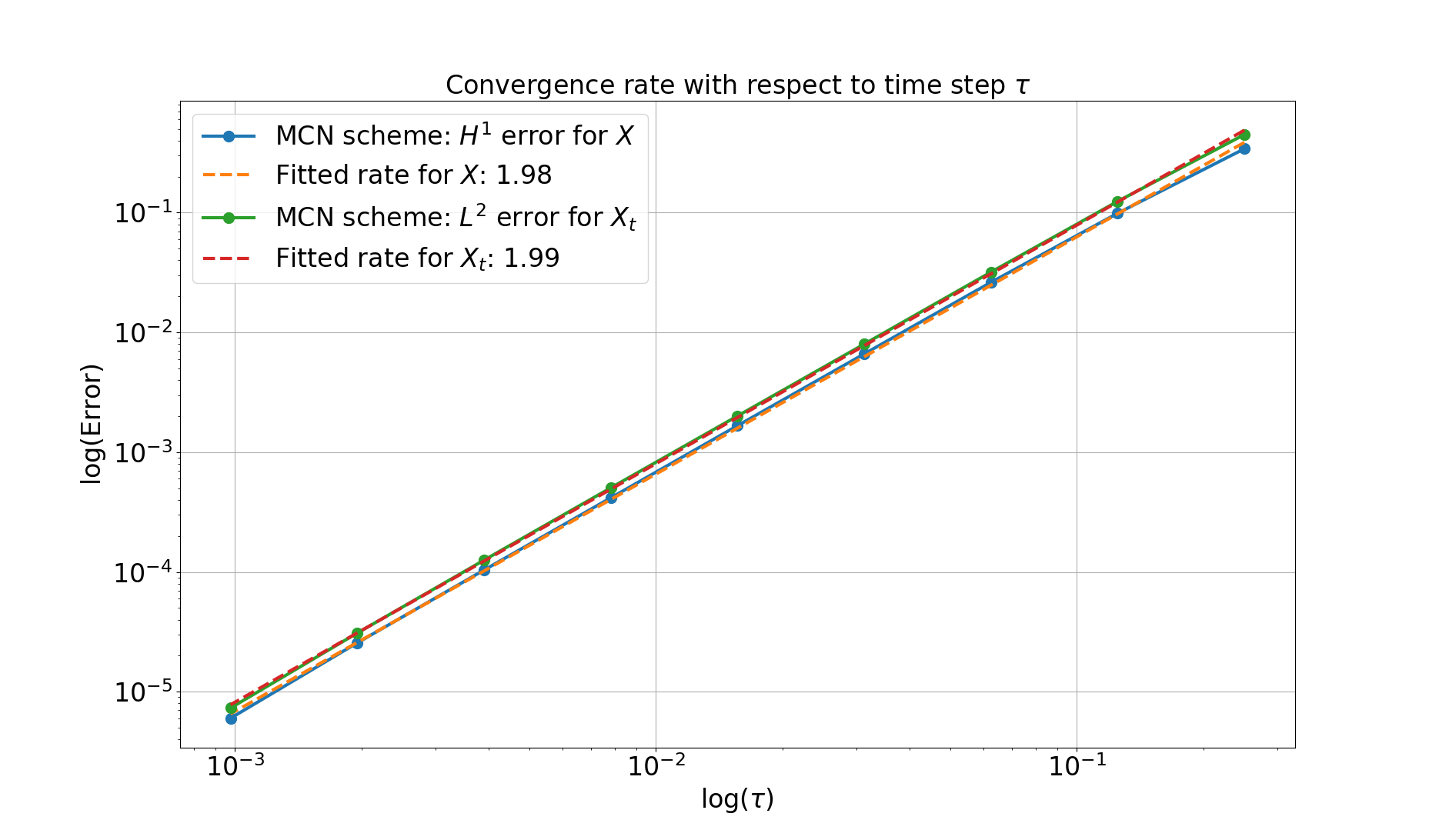}
        \caption{The MCN Scheme~\ref{alg:wave-modified-detailed} achieves a rate of approximately $2$ for the stochastic wave equation~\eqref{eq:wave}.} 
        \label{fig:wave_convergence}
    \end{subfigure}
    \caption{Convergence rates for the stochastic heat equation~\eqref{eq:heat} (left) and wave equation~\eqref{eq:wave} (right), averaged over $1000$ Monte Carlo realizations.  }
    \label{fig:convergence_rates}
\end{figure}




\section{Mathematical setup and preliminarily results}\label{section 2}
Let \(D\subset\mathbb{R}^d\) be a smooth bounded domain. Let \((\Omega, \mathcal{F}, \mathbb{F}, \mathbb{P})\) with $\mathbb{F} = \{\mathcal{F}_t;\,t\in[0,T]\}$ be a complete filtered probability space.
Let $\mathrm{B}$ be a Banach space. Let $$\mathbb{L}^2_{\mathbb{F}}\bigl(\Omega\times[0,T];\mathrm{B}\bigl)\qquad \text{and} \qquad\mathbb{L}^2_{\mathbb{F}}\bigl([0,T];C([0,T];\mathrm{B})\bigr)$$ be the spaces of $\mathrm{B}$-valued $\mathbb{F}$-adapted stochastic processes and $\mathrm{B}$-valued continuous $\mathbb{F}$-adapted stochastic processes, respectively. For some $m\in\mathbb{N}$, let \(W=(W_1,...,W_m) \) be an \( \mathbb{R}^m \)-valued $\mathbb{F}$-adapted Wiener process on the filtered probability space \((\Omega, \mathcal{F},  \mathbb{F},  \mathbb{P})\). Let $\Phi=(\Phi_1,...,\Phi_m)$ be a $(\mathbb{L}^2(D))^m$-valued {\em additive} noise coefficient. We then define 
$$\Phi W(t)=\sum_{i=1}^m\Phi_iW_i(t).$$  
Let $\left <\cdot,\cdot\right >_\mathrm{K}$ denote the inner product on Hilbert space $\mathrm{K}$. Let $<\cdot,\cdot>_{\mathcal{D}}$ denote the dual pairing between $\mathbb{H}^{-1}(D)$ and $\mathbb{H}_0^1(D)$. For convenience, we define 
\[
\langle u,v\rangle := \int_D u(x)\,v(x)\,{\rm d}x.
\]
We also use the standard fact that if $f \in C([0, T]; \mathrm{B})$ and $f' \in L^1([0, T]; \mathrm{B})$, then for every subinterval $[a, a + \kappa] \subset [0, T]$, the fundamental theorem of calculus for Bochner integrals holds with
$$
f(a + s) = f(a) + \int_0^s f'(a + \tau) \, \mathrm{d}\tau, \qquad 0 \leq s \leq \kappa.
$$
The following lemma addresses the quadrature error in the context of the Crank--Nicolson scheme where the integrand is of limited regularity (see, {\em e.g.,}\cite[Theorem 2]{DragomirMabizela2000}).
\begin{lemma}\label{lem:mean-value-fractional-vector}
Let \((\mathrm{B}, \|\cdot\|_{\mathrm{B}})\) be a Banach space. Assume \(f \in C^{1,\gamma}([0,T]; \mathrm{B})\) for some \(\gamma \in (0,1]\)and there exists a constant $\widetilde{C}>0$ such that
\begin{align}\label{today17}
\|f'(t) - f'(s)\|_\mathrm{B} \leq \widetilde{C} |t - s|^\gamma \quad \forall s, t \in [0,T]\,.
\end{align}
Then for any interval \([a, a+\kappa] \subset [0,T]\) we have that
\[
\left\| \frac{f(a) + f(a+\kappa)}{2} - \frac{1}{\kappa} \int_a^{a+\kappa} f(\xi)\,\mathrm{d}\xi \right\|_\mathrm{B} \leq \frac{\widetilde{C}}{(\gamma+2)(\gamma+3)} \kappa^{1+\gamma}.
\]
\end{lemma}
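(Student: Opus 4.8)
The plan is to reduce the statement to a weighted integral of $f'$ and then to exploit the Hölder bound \eqref{today17} through a \emph{symmetrization} that turns the integrand into increments $f'(s)-f'(y)$; this is exactly what produces the sharp constant. By translation I may assume $a=0$ and work on $[0,\kappa]$, the general interval following from the substitution $\xi\mapsto a+\xi$.

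First I would use the Bochner fundamental theorem of calculus recalled above to express both averaging terms relative to $f(0)$. This gives $\tfrac12\bigl(f(0)+f(\kappa)\bigr)=f(0)+\tfrac12\int_0^\kappa f'(s)\,\mathrm ds$ and, after interchanging the order of integration (Fubini, legitimate since $f'\in L^1([0,T];\mathrm B)$), $\tfrac1\kappa\int_0^\kappa f(\xi)\,\mathrm d\xi=f(0)+\tfrac1\kappa\int_0^\kappa(\kappa-s)f'(s)\,\mathrm ds$. Subtracting cancels $f(0)$ and yields the Peano-type representation
\[
\frac{f(0)+f(\kappa)}{2}-\frac1\kappa\int_0^\kappa f(\xi)\,\mathrm d\xi=\frac1\kappa\int_0^\kappa\Bigl(s-\frac{\kappa}{2}\Bigr)f'(s)\,\mathrm ds .
\]
The decisive step is then to rewrite the weight as $s-\tfrac{\kappa}{2}=\tfrac1\kappa\int_0^\kappa(s-y)\,\mathrm dy$, turning the right-hand side into a double integral; swapping the names of $s$ and $y$ and averaging the two equal expressions antisymmetrizes the integrand, so that
\[
\frac{f(0)+f(\kappa)}{2}-\frac1\kappa\int_0^\kappa f(\xi)\,\mathrm d\xi=\frac1{2\kappa^{2}}\int_0^\kappa\!\!\int_0^\kappa (s-y)\bigl[f'(s)-f'(y)\bigr]\,\mathrm ds\,\mathrm dy .
\]

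Finally I would take norms, pass the norm inside the double integral, and invoke \eqref{today17} to bound $\|f'(s)-f'(y)\|_{\mathrm B}\le\widetilde C\,|s-y|^{\gamma}$, reducing the estimate to the scalar integral $\int_0^\kappa\!\int_0^\kappa|s-y|^{1+\gamma}\,\mathrm ds\,\mathrm dy$. By symmetry this equals $2\int_0^\kappa\!\int_0^s(s-y)^{1+\gamma}\,\mathrm dy\,\mathrm ds=\tfrac{2\kappa^{3+\gamma}}{(2+\gamma)(3+\gamma)}$, where the factor $(2+\gamma)(3+\gamma)$ comes from two successive monomial integrations (a Beta integral $B(\gamma+2,2)$). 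Multiplying by $\tfrac1{2\kappa^{2}}$ gives precisely the claimed bound $\tfrac{\widetilde C}{(\gamma+2)(\gamma+3)}\,\kappa^{1+\gamma}$.

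Every manipulation up to the last step is an equality, so there is no deep obstacle here: the only points needing a careful word are the interchange of integration order for Bochner integrals and the passage of the norm inside. The one genuinely non-mechanical idea is the symmetrization $s-\tfrac{\kappa}{2}=\tfrac1\kappa\int_0^\kappa(s-y)\,\mathrm dy$, which localizes the estimate to $f'$-increments; a cruder route that merely subtracts a single value $f'(\tfrac{\kappa}{2})$ from $f'(s)$ does close the argument but yields a slightly different (in fact weaker) constant, so obtaining exactly $(\gamma+2)(\gamma+3)$ is what forces the symmetric double-integral form.
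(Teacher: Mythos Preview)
Your proof is correct and complete. The Peano-kernel representation via the fundamental theorem of calculus is right, the symmetrization identity $s-\tfrac{\kappa}{2}=\tfrac{1}{\kappa}\int_0^\kappa(s-y)\,\mathrm{d}y$ followed by antisymmetrization is valid, and the final scalar integral is computed correctly to give exactly the constant $\bigl((\gamma+2)(\gamma+3)\bigr)^{-1}$.

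There is nothing to compare against in the paper itself: the lemma is stated without proof and simply referred to \cite[Theorem~2]{DragomirMabizela2000}. Your argument therefore supplies what the paper omits. Your closing remark is also accurate: subtracting the single value $f'(\kappa/2)$ instead of symmetrizing yields the constant $\bigl((\gamma+2)2^{1+\gamma}\bigr)^{-1}$, which coincides with $\bigl((\gamma+2)(\gamma+3)\bigr)^{-1}$ at $\gamma=1$ but is strictly worse for $\gamma\in(0,1)$, so the double-integral symmetrization is indeed what pins down the stated constant.
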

\section{Stochastic Heat Equation}
\label{sec:heat}
Let \(D\subset\mathbb{R}^d\) be a smooth bounded domain. Let $(\Omega, \mathcal{F},\mathbb{F}, {\mathbb P})$ be a given probability space, let $W$ as specified in Section~\ref{section 2} and $A$ from \eqref{operator}. The existence of a unique variational solution $X \in \mathbb{L}^2_{\mathbb{F}}\big(\Omega;{C}([0, T];\mathbb{L}^2(D))\big) \cap \mathbb{L}^2_{\mathbb{F}}\big(\Omega;\mathbb{L}^2([0,T];\mathbb{H}_0^1(D))\big)$ to SPDE \eqref{eq:heat} may {\em e.g.} be found in \cite[Ch. 3]{Chow2014}, which satisfies
 \begin{align}\label{today01}
         \E\bigl [\|X\|^2_{{C}([0,T];\mathbb{L}^2(D))}\bigr] + \E\bigl [\| X \|^2_{\mathbb{L}^2([0,T];\mathbb{H}_0^1(D))}\bigr ]\leq C \big(\|X_0 \|^2_{\mathbb{L}^2(D)} + \|\Phi\|_{(\mathbb{L}^2(D))^m}^2 \big).
    \end{align}
Moreover, by standard regularity theory for more regular data $X_0\in \mathbb{H}_0^1(D)$ and $\Phi\in (\mathbb{L}^2(D))^m$, it satisfies 
    \begin{align}\label{today02}
         \E\bigl [\|X\|^2_{{C}([0, T];\mathbb{H}_0^1(D))}\bigr ] + \E\bigl [\|X \|^2_{\mathbb{L}^2([0, T];\mathbb{H}^2(D))}\bigr ]\leq C \big(\| X_0 \|^2_{\mathbb{H}_0^1(D)} +  \|\Phi\|_{(\mathbb{L}^2(D))^m}^2\big).
    \end{align}
For the {\em random} PDE \eqref{eq:heat-uPDE}, if $X_0\in \mathbb{H}_0^1(D)$ and $\Phi\in(\mathbb{H}^2(D)\cap \mathbb{H}_0^1(D))^m$,  we have the existence and uniqueness of a variational solution $$ u \in \mathbb{L}^2_\mathbb{F}\big(\Omega ; {C}([0,T];\mathbb{H}_0^1(D))\big)\cap\mathbb{L}^2_{\mathbb{F}}\big(\Omega;\mathbb{L}^2([0,T];\mathbb{H}^2(D))\big),$$
such that $u_t\in \mathbb{L}_{\mathbb{F}}^2\big(\Omega;\mathbb{L}^2([0,T];\mathbb{L}^2(D))\big),$  and  the following variational formulation holds: $\mathbb{P}$-a.s., for all $\varphi\in \mathbb{H}_0^1(D)$, a.e. $s\in [0,T]$,
\begin{align*}
    \left< u_{t}(s), \varphi\right>=-\left<\nabla u(s), \nabla \varphi\right>+ \left<\Delta\bigl[\Phi W(s)\bigr],\varphi\right>,
\end{align*}
with $u(0)=X_0$. In addition, the following estimate holds
\begin{align}\label{today03}
\mathbb{E}\bigl [\|u\|_{{C}([0,T];\mathbb{H}_0^1(D))\cap \mathbb{L}^2([0,T];\mathbb{H}^2(D))}^2\bigr ]+\E\bigl [\|u_t\|_{\mathbb{L}^2([0,T];\mathbb{L}^2(D))}^2\bigr ]\le C\big(\|X_0\|_{\mathbb{H}_0^1(D)}^2+ \|\Phi\|_{(\mathbb{H}^2(D))^m}^2\big)\,;
\end{align}
 see, {\em e.g.,} \cite[Ch. 7, Sec. 1]{Evans2010}. Moreover, if $X_0\in \mathbb{H}^4(D)\cap \mathbb{H}_0^1(D)$ and $\Phi\in(\mathbb{H}^3(D)\cap\mathbb{H}_0^1(D))^m$ with compatibility condition $AX_0, A\Phi\in\bigl(\mathbb{H}_0^1(D)\bigr)^m$, then with the help of H\"older continuity of $A[\Phi W]$ in time \big({\em i.e.,} $A [\Phi W]\in C^{1/2}\bigl([0,T];\mathbb{L}^2(\Omega;\mathbb{H}_0^1(D))\bigr)$\big), the solution $u$ to {\em random} PDE \eqref{eq:heat-uPDE} satisfies $u \in C^{1,1/2}\big([0, T];\mathbb{L}^2(\Omega; \mathbb{H}_0^1(D))\big),$ with the following higher regularity bound
\begin{align}\label{eq:heat-regularity}
\|u\|_{{C}^{1,1/2}([0,T];\mathbb{L}^2(\Omega;\mathbb{H}_0^1(D)))}^2\le C\big(\|X_0\|_{\mathbb{H}^4(D)}^2+ \|\Phi\|_{(\mathbb{H}^3(D))^m}^2\big).
\end{align}
This result may {\em e.g.} be deduced from \cite[Prop. 4.1.2, Thm~4.3.1]{Lunardi1995} by exploiting temporal H\"older regularity of the Wiener process on the right-hand side of \eqref{eq:heat-uPDE}.
\subsection{Modified Crank--Nicolson Scheme}
For simplicity, in this section, we take $A$ to be the negative Laplace operator. Our main result (Theorem~\ref{thm:main} below), however, remains valid for any $A$ from \eqref{operator}. The strong solution $u$ to the {\em random} PDE \eqref{eq:heat-uPDE} then satisfies the integral identity
\begin{align}\label{today02}
 u(t_{j+1}) - u(t_j)- \int_{t_{j}}^{t_{j+1}}\Delta u(s)  \,\mathrm{d}s = \int_{t_j}^{t_{j+1}} \Delta[\Phi W(s)]  \,\mathrm{d}s.
\end{align}
To exploit the improved time-regularity \eqref{eq:heat-regularity} of $u_t$, we use the Crank-Nicolson quadrature rule for the integral $\int_{t_j}^{t_{j+1}} \Delta u(s)\, {\rm d}s$ in \eqref{eq:heat-uPDE} since its integrand is
Lipschitz in time, and a Riemann sum for the non-Lipschitz integrand in
$\int_{t_j}^{t_{j+1}} \Delta [\Phi W(s)]\, {\rm d}s$ on a finer mesh of size $\tau^2$ to partition intervals $[t_j, t_{j+1}]$. We proceed as follows:
\begin{enumerate}
    \item \textbf{Approximation of the deterministic integral:}
    by using the trapezoidal rule, the term 
   $ 
    \mathcal{Q}_j^\Delta := \int_{t_j}^{t_{j+1}} \Delta u(s)  \,\mathrm{d}s
    $
    is approximated by $\frac{\tau}{2} \Delta \bigl [u(t_{j+1}) + u(t_j) \bigr ] =: \mathcal{I}_j^\Delta.$
    \item \textbf{Approximation of the integral with stochastic integrand:}
    on each interval $[t_j, t_{j+1}]$, we introduce a finer uniform mesh of step size $\tau^2$, with nodes
    \[
    t_{j,\ell} = t_j + \ell \tau^2, \quad \ell = 0, \dots, \mathrm{M},
    \]
    where $\mathrm{M} = \tau^{-1}$. By using a Riemann sum on a finer mesh, the integral
   $
    \mathcal{Q}_j^W := \int_{t_j}^{t_{j+1}} W(s)\,\mathrm{d}s
 $
    is then approximated by 
    $
    \sum_{\ell=1}^{\mathrm{M}} \tau^2  W(t_{j,\ell}) =: \mathcal{I}_j^W.
    $ This idea is inspired from \cite{P1}.
\end{enumerate}
We remark that to apply the trapezoidal rule to $\mathcal{Q}_j^W$ is not successful since its integrand is not Lipshitz, which avoids to benefit from it in terms of accuracy; see requirement~\eqref{today17} in Lemma~\ref{lem:mean-value-fractional-vector}. This is the reason why we use ${\mathcal I}_j^W$ which uses the basic quadrature rule — but on a finer mesh to compensate for its worser approximation property. The resulting {\em modified Crank--Nicolson scheme} for the {\em random} PDE \eqref{eq:heat-uPDE} then is
\begin{equation}
\label{eq:heat-schemeCN}
\begin{cases}
\displaystyle U_{j+1} - U_j -\frac{\tau}{2} \Delta [U_{j+1} + U_j] =\Delta\bigg[\Phi  \sum_{\ell=1}^{\mathrm{M}} \tau^2 W(t_{j,\ell})\bigg], & j=0,\dots,N-1,\\[4pt]
U_0=X_0,\\[4pt]
U_j\big|_{\partial D}=0, \qquad j=1,\dots,N.
\end{cases}
\end{equation}
We remark that the term on the right-hand side of $\eqref{eq:heat-schemeCN}_1$ does not significantly increase the complexity of a standard Crank-Nicolson scheme per path since its computation does {\em not} involve the unknown $U_{j+1}$ and hence can be computed \lq offline\rq.
In order to now have a finite sequence of approximates $\{X_j;\, 0 \leq j \le N\}$  for the solution $X$ to \eqref{eq:heat}, we define $X_j: = U_j + \Phi W(t_j)$, for $0 \leq j \leq N$. We herewith arrive at the new {\em modified Crank--Nicolson scheme} for the SPDE \eqref{eq:heat} -- which reads as follows.
\begin{algorithm}[H]
\caption{{\em modified Crank--Nicolson scheme} for SPDE~\eqref{eq:heat} with $A=-\Delta$}
\label{eq:heat-schemeCN-SPDE}
\begin{algorithmic}[1]
\State \textbf{Inputs:} domain $D$, noise coefficient $\Phi$, final time $T$, total steps $N$, time step $\tau=T/N$, initial condition $X_0$, Wiener process $W$.
\State \textbf{Time grid:} $t_j = j\tau$ for $j=0,\dots,N$.
\State \textbf{Fine grid per interval:} on $[t_j,t_{j+1}]$, define $t_{j,\ell} = t_j + \ell \tau^2$, $\ell=0,\dots,\mathrm{M}$, with $\mathrm{M}=\tau^{-1}$.
\State \textbf{Initialization:} set $X_0\in \mathbb{H}_0^1(D)$. 
\For{$j = 0$ \textbf{to} $N-1$}
\State Compute the Brownian increment: $\Delta_{j+1}W = W(t_{j+1}) - W(t_j)$.
\State Compute the correction term:
    $$
    \mathfrak{E}_{j+1}^\Delta(W) =
    \Delta\!\left[\Phi \sum_{\ell=1}^{\mathrm{M}} \tau^2\, W(t_{j,\ell})\right]
    - \frac{\tau}{2}\,\Delta\!\left[\Phi\big(W(t_{j+1}) + W(t_j)\big)\right].
    $$
\State Compute $X_{j+1}\in H_0(D)$ from the following scheme:
    $$
    X_{j+1} - X_j - \frac{\tau}{2}\,\Delta\,(X_{j+1}+X_j)
    \;=\; \Phi\,\Delta_{j+1}W + \mathfrak{E}_{j+1}^\Delta(W).
    $$
\EndFor
\State \textbf{Output:} $\{X_j;\,0\le j\le N\}$.
\end{algorithmic}
\end{algorithm}

Note that the correction term $\mathfrak{E}_{j+1}^\Delta( W)$ in the {\em modified Crank--Nicolson} Scheme~\ref{eq:heat-schemeCN-SPDE} plays a key role in improving the overall convergence rate. The following theorem settles ${\mathcal O}(\tau^{3/2})$ convergence rate for the {\em modified Crank--Nicolson} Scheme~\ref{eq:heat-schemeCN-SPDE}.

\begin{theorem}[first main result]
\label{thm:main}
Let $X_0\in\mathbb{H}^4(D)\cap\mathbb{H}_0^1(D), \Phi\in\bigl(\mathbb{H}^3(D)\cap\mathbb{H}_0^1(D)\bigr)^m$ with compatibility condition $AX_0\in \mathbb{H}_0^1(D), A\Phi\in\bigl(\mathbb{H}_0^1(D)\bigr)^m$. Let $\{X_j:1\le j\le N\}$ be obtained from the {\em modified Crank-Nicolson} Scheme \ref{eq:heat-schemeCN-SPDE} and $X$ be the unique solution to SPDE~\eqref{eq:heat} with $A=-\Delta$. Then there exists $C\equiv C(T, X_0, \Phi, m)>0$ such that
\[
\sup_{1\le j\le N} \E \bigl[ \norm{X(t_j) - X_j}_{\mathbb{L}^2(D)}^2 \bigr]^{1/2} + \left( \tau \sum_{j=0}^{N-1} \E\left[ \norm{\bigr(X(t_j)+X(t_{j+1})\big) - 
\big(X_j+X_{j+1}\bigl)}_{\mathbb{H}^1_0(D)}^2 \right] \right)^{1/2} \le C\tau^{3/2}.
\]
\end{theorem}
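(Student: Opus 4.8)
The plan is to reduce everything to the error $e_j := u(t_j) - U_j$ of the random--PDE iterates. Since $X(t_j) = u(t_j) + \Phi W(t_j)$ and the scheme defines $X_j = U_j + \Phi W(t_j)$, the noise terms cancel \emph{exactly}, so $X(t_j) - X_j = e_j$ and $\bigl(X(t_j)+X(t_{j+1})\bigr)-\bigl(X_j+X_{j+1}\bigr) = e_j+e_{j+1}$. Thus it suffices to prove $\sup_k \E\norm{e_k}_{\mathbb{L}^2(D)}^2 \le C\tau^3$ together with $\tau\sum_j \E\norm{e_j+e_{j+1}}_{\mathbb{H}_0^1(D)}^2 \le C\tau^3$; the two claimed rates then follow by taking square roots and using Poincaré's inequality, noting that $e_0=0$. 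First I would derive the error equation: subtracting the $U$-scheme \eqref{eq:heat-schemeCN} from the integral identity for $u$ on $[t_j,t_{j+1}]$ and inserting $\pm\tfrac{\tau}{2}\Delta(u(t_{j+1})+u(t_j))$ gives
\[
e_{j+1} - e_j - \tfrac{\tau}{2}\Delta(e_{j+1}+e_j) = -\mathcal{R}_j^{\mathrm{det}} + \mathcal{R}_j^{\mathrm{stoch}},
\]
where $\mathcal{R}_j^{\mathrm{det}} = \Delta\bigl[\tfrac{\tau}{2}(u(t_{j+1})+u(t_j)) - \int_{t_j}^{t_{j+1}} u(s)\,\mathrm{d}s\bigr]$ is the trapezoidal residual of the elliptic term and $\mathcal{R}_j^{\mathrm{stoch}} = \sum_{i=1}^m \Delta\Phi_i\,Q_{j,i}$ with $Q_{j,i} = \int_{t_j}^{t_{j+1}} W_i(s)\,\mathrm{d}s - \sum_{\ell=1}^{\mathrm{M}} \tau^2 W_i(t_{j,\ell})$ the fine-mesh Riemann-sum residual of the noise term.

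Next I would run the standard Crank--Nicolson energy estimate: test with $\overline{e}_j := \tfrac12(e_{j+1}+e_j)$. The symmetric part telescopes to $\tfrac12(\norm{e_{j+1}}_{\mathbb{L}^2(D)}^2 - \norm{e_j}_{\mathbb{L}^2(D)}^2) + \tau\norm{\nabla\overline{e}_j}_{\mathbb{L}^2(D)}^2$, while the right-hand side is paired through the duality $\mathbb{H}^{-1}(D)\times\mathbb{H}_0^1(D)$; Cauchy--Schwarz and Young absorb $\tfrac{\tau}{2}\norm{\nabla\overline{e}_j}_{\mathbb{L}^2(D)}^2$ into the left and leave $\tfrac{C}{\tau}\bigl(\norm{\mathcal{R}_j^{\mathrm{det}}}_{\mathbb{H}^{-1}(D)}^2 + \norm{\mathcal{R}_j^{\mathrm{stoch}}}_{\mathbb{H}^{-1}(D)}^2\bigr)$. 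Because $A=-\Delta$ is coercive there is no zeroth-order term, so summation in $j$ telescopes cleanly and no discrete Grönwall is needed (for a general $A$ from \eqref{operator} the Gårding term would be handled by discrete Grönwall). It then remains to show each residual is $\Oh(\tau^5)$ in the squared $\mathbb{L}^2(\Omega;\mathbb{H}^{-1}(D))$-norm, since $N = T/\tau$ yields $\tfrac{C}{\tau}\cdot N\cdot\tau^5 = C\tau^3$ for both the $\mathbb{L}^2$-bound and the summed $\mathbb{H}_0^1$-bound.

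The two residual estimates are the heart of the argument. For $\mathcal{R}_j^{\mathrm{det}}$ I would apply Lemma~\ref{lem:mean-value-fractional-vector} with $f=u$, $\mathrm{B}=\mathbb{L}^2(\Omega;\mathbb{H}_0^1(D))$, $\gamma=1/2$, $\kappa=\tau$, using precisely the regularity $u\in C^{1,1/2}([0,T];\mathbb{L}^2(\Omega;\mathbb{H}_0^1(D)))$ from \eqref{eq:heat-regularity}; combined with boundedness of $\Delta:\mathbb{H}_0^1(D)\to\mathbb{H}^{-1}(D)$ this gives $\E\norm{\mathcal{R}_j^{\mathrm{det}}}_{\mathbb{H}^{-1}(D)}^2 \le C\tau^5$. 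The genuinely new estimate, which I expect to be the main obstacle, is the stochastic one: one must verify that the Riemann sum on the fine mesh of width $\tau^2$ recovers the \emph{same} $\tau^{5/2}$ accuracy as the trapezoidal rule, even though the Brownian integrand is only Hölder-$1/2^-$. A naive pathwise triangle-inequality bound gives only $|Q_{j,i}|\lesssim \mathrm{M}\cdot\tau^3 = \tau^2$, hence rate $\tau$; the improvement to $\tau^{5/2}$ must come from cancellation. I would therefore compute the variance directly: writing $Q_{j,i} = \sum_{\ell=1}^{\mathrm{M}} \int_{t_{j,\ell-1}}^{t_{j,\ell}}\bigl(W_i(s)-W_i(t_{j,\ell})\bigr)\,\mathrm{d}s$, each summand depends only on the Brownian increments inside $[t_{j,\ell-1},t_{j,\ell}]$, so it is mean-zero and \emph{independent} across $\ell$; an elementary covariance computation gives variance $\tau^6/3$ per subinterval, whence $\E[Q_{j,i}^2] = \mathrm{M}\cdot\tau^6/3 = \tau^5/3$. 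Using $\Delta\Phi_i\in\mathbb{H}_0^1(D)$ from the compatibility hypothesis, this yields $\E\norm{\mathcal{R}_j^{\mathrm{stoch}}}_{\mathbb{H}^{-1}(D)}^2 \le C\bigl(\sum_i\norm{\Delta\Phi_i}_{\mathbb{H}^{-1}(D)}^2\bigr)\tau^5 \le C\tau^5$. Feeding both $\Oh(\tau^5)$ bounds into the summed energy inequality closes the estimate with the sharp exponent $\tau^{3/2}$, and reverting the transformation via $X(t_j)-X_j=e_j$ delivers the theorem.
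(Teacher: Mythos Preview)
Your proposal is correct and follows essentially the same approach as the paper: reduce to the random--PDE error $e_j=u(t_j)-U_j$, test the error equation with $\tfrac12(e_{j+1}+e_j)$, absorb half of the gradient term via Young, and bound the trapezoidal residual by Lemma~\ref{lem:mean-value-fractional-vector} with $\gamma=1/2$ and the fine--mesh Riemann residual by the independent--increments variance computation $\E[Q_{j,i}^2]=\mathrm{M}\cdot\tau^6/3=\tau^5/3$. The only cosmetic difference is that you phrase the right-hand side pairing via the $\mathbb{H}^{-1}$--$\mathbb{H}_0^1$ duality whereas the paper integrates by parts explicitly and tracks $\norm{\nabla\Phi}_{(\mathbb{L}^2(D))^m}$; these are the same estimate.
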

\begin{proof}
We prove this result with the help of the {\em modified Crank--Nicolson scheme}~\eqref{eq:heat-schemeCN}  applied to the \emph{random} PDE~\eqref{eq:heat-uPDE}. The proof proceeds through the following steps.

\noindent
\textbf{Step 1. Error inequality:} The exact solution $u$ to \eqref{eq:heat-uPDE} satisfies the relation
\[
{u(t_{j+1}) - u(t_j)} - \frac{\tau}{2} \Delta \bigl[u(t_{j+1}) + u(t_j)\bigr] = \int_{t_j}^{t_{j+1}} \Delta\bigl [\Phi W(s)\bigr ] \, \mathrm{d}s - \tau \Delta \mathcal{Q}^{u}_{j+1/2},
\]
where $\mathcal{Q}^{u}_{j+1/2}: = \frac{1}{\tau} \int_{t_j}^{t_{j+1}} u(s) \, \mathrm{d}s - \frac{1}{2} \bigl[u(t_{j+1}) + u(t_j)\bigr]$ represents the trapezoidal rule truncation error. 
We define the numerical error $$e_j := u(t_j) - U_j=X(t_j)-X_j.$$ By subtracting the scheme \eqref{eq:heat-schemeCN}, we obtain
\begin{equation*}
{e_{j+1} - e_j} - \frac{\tau}{2} \Delta \bigl[e_{j+1} + e_j\bigr] = \Delta \bigl [\Phi\big(\mathcal{Q}_j^W - \mathcal{I}_j^W\big)\bigr ]-\tau\Delta\mathcal{Q}^{u}_{j+ 1/2}.
\end{equation*}
By taking the $\mathbb{L}^2(D)$-inner product with ${e_{j+1/2}}:= \frac{1}{2}\bigl[e_{j+1} + e_j\bigr]$ yields 
\begin{equation}\label{today07}
 \frac{1}{2}\bigl(\norm{e_{j+1}}_{\mathbb{L}^2(D)}^2 - \norm{e_j}_{\mathbb{L}^2(D)}^2 \bigr) + \frac{\tau}{2} \norm{\nabla e_{j+1/2}}^2 = -\tau \left< \frac{1}{\tau} \nabla \bigl [\Phi\big(\mathcal{Q}_j^W - \mathcal{I}_j^W\big)\bigr ] -\nabla\mathcal{Q}^{u}_{j+1/2}, \nabla{e_{j+1/2}}\right>.
\end{equation}
For any $1\le J\le N$, by applying Young's inequality and taking expectations, and summing over $j = 0$ to $J-1$, we obtain the error inequality
\begin{align}\label{today08}
\mathbb{E}[\norm{e_J}_{\mathbb{L}^2(D)}^2] + \frac{\tau}{2} \sum_{j=0}^{J-1} \mathbb{E}[\norm{\nabla e_{j+1/2}}_{\mathbb{L}^2(D)}^2] \le &\mathbb{E}[\norm{e_0}_{\mathbb{L}^2(D)}^2]+ \frac{C}{\tau}\sum_{j=0}^{J-1} \mathbb{E}\bigl [\norm{\big(\mathcal{Q}_j^W - \mathcal{I}_j^W\big)}_{\mathbb{R}^m}^2\bigr ] \norm{\nabla \Phi}_{(\mathbb{L}^2(D))^m}^2\notag\\&\qquad  + C\tau \sum_{j=0}^{J-1} \mathbb{E}[\norm{\nabla \mathcal{Q}^{u}_{j+1/2}}_{\mathbb{L}^2(D)}^2].
\end{align}
In next two steps, we estimate the right hand side of the error inequality~\eqref{today08}.

\noindent
\textbf{Step 2. Trapezoidal rule truncation error:} We apply Lemma~\ref{lem:mean-value-fractional-vector} for $f=u,a=t_j, \kappa=\tau, \mathrm{B}=\mathbb{L}^2(\Omega;\mathbb{H}_0^1(D))$ and $\gamma=1/2$ to get 
\[
\norm{\mathcal{Q}^{u}_{j+1/2}}_{\mathbb{L}^2(\Omega;\mathbb{H}_0^1(D))}\le \widetilde{C}\tau^{3/2},
\]
where $\widetilde{C}=\frac{4}{35}\|u_t\|_{C^{1/2}([0,T];\mathbb{L}^2(\Omega;\mathbb{H}_0^{1}(D)))}$, which is finite due the estimate~\eqref{eq:heat-regularity}. Finally, we get
\begin{align}\label{today09}
\tau \sum_{j=0}^{J-1} \mathbb{E}\bigl [\norm{\nabla\mathcal{Q}^{u}_{j+1/2}}_{\mathbb{L}^2(D)}^2\bigr ]\le C\tau^3,
\end{align}
where the constant depends on initial data $X_0$ and noise coefficients $\Phi$; see \eqref{eq:heat-regularity}.

\noindent
\textbf{Step 3. Micro-step moment bound:} In this step, we prove that the following bound holds
\begin{align}\label{today06}
\sum_{j=0}^{N-1} \mathbb{E}\bigl[\|\mathcal{Q}_j^W - \mathcal{I}_j^W\|^2_{\mathbb{R}^m}\bigr]\le C\tau^{4}.
\end{align}
Instead, for convenience, we define
\[
\mathcal{J}^W_j :=\mathcal{Q}_j^W - \mathcal{I}_j^W= \sum_{\ell=1}^{\mathrm{M}} \int_{t_{j,\ell-1}}^{t_{j,\ell}} \bigl(W(t_{j,\ell}) - W(s) \bigr) \, \mathrm{d}s,
\]
where $W$ is the $\mathbb{R}^m$-valued standard Wiener process with the covariance
\begin{align}\label{covariance}
    \mathbb{E}\bigl [(W(t) - W(s)) \otimes (W(t) - W(r))\bigr ] = \bigl( t - \max\{s, r\} \bigr) I, \qquad 0 \le s, r \le t,
\end{align}
where $I$ is the identity matrix in $\mathbb{R}^m$. By expanding the squared norm and using the independence of increments on disjoint micro-intervals we obtain
\[
\mathbb{E}\bigl [\|\mathcal{J}^W_j\|^2_{\mathbb{R}^m}\bigr ] = \sum_{\ell=1}^{\mathrm{M}} \int_{t_{j,\ell-1}}^{t_{j,\ell}} \int_{t_{j,\ell-1}}^{t_{j,\ell}} \mathbb{E}\Bigl [\bigl\langle W(t_{j,\ell}) - W(s), W(t_{j,\ell}) - W(r) \bigr\rangle_{\mathbb{R}^m}\Bigr ] \, \mathrm{d}s \, \mathrm{d}r.
\]
By applying the covariance identity \eqref{covariance} with $t = t_{j,\ell}$, we get
\begin{align}\label{today10}
\mathbb{E}\bigl [\|\mathcal{J}^W_j\|^2_{\mathbb{R}^m}\bigr ] = m \sum_{\ell=1}^{\mathrm{M}} \int_{t_{j,\ell-1}}^{t_{j,\ell}} \int_{t_{j,\ell-1}}^{t_{j,\ell}} \bigl( t_{j,\ell} - \max\{s, r\} \bigr) \, \mathrm{d}s \, \mathrm{d}r.
\end{align}
By fixing a micro-interval $[a, b] = [t_{j,\ell-1}, t_{j,\ell}]$ of length $\tau^2 = b - a$ we need to compute
\[
I_\ell := \int_a^b \int_a^b \bigl( b - \max\{s, r\} \bigr) \, \mathrm{d}s \, \mathrm{d}r.
\]
We split the integration domain into the triangles $\{r \ge s\}$ and $\{s > r\}$. By symmetry we obtain
\[
I_\ell = 2 \int_a^b \Bigl( \int_a^r (b - r) \, \mathrm{d}s \Bigr) \mathrm{d}r = 2 \int_a^b (r - a)(b - r) \, \mathrm{d}r.
\]
Thus we get
$
I_\ell = 2 \cdot \frac{(b - a)^3}{6} = \frac{\tau^6}{3}.
$ By returning to the main expression~\eqref{today10}, we obtain
\[
\mathbb{E}\bigl [\|\mathcal{J}^W_j\|^2_{\mathbb{R}^m}\bigr ] = m \sum_{\ell=1}^{\mathrm{M}} \frac{\tau^6}{3} = \frac{m}{3} \mathrm{M} \tau^6.
\]
Since $\mathrm{M} = \tau^{-1}$, we have
\[
\mathbb{E}\bigl [\|\mathcal{J}^W_j\|^2_{\mathbb{R}^m}\bigr ] = \frac{m}{3} \tau^{-1} \cdot \tau^6 = \frac{m}{3} \tau^5.
\]
By summing over $j$ from $0$ to $N-1$ with $N\tau = T$, we obtain
\[
\sum_{j=0}^{N-1} \mathbb{E}\bigl [\|\mathcal{J}^W_j\|^2_{\mathbb{R}^m}\bigr ] = N \cdot \frac{m}{3} \tau^5 = \frac{m}{3} T \tau^4.
\]
It gives the estimate~\eqref{today06}.

\noindent
\textbf{Step 4. Strong convergence rate:} By making use of estimates~\eqref{today09}-\eqref{today06} and $e_0 = 0$, from the error inequality~\eqref{today08} we obtain for all $1\le J\le N$,
\[
\mathbb{E}\bigl [\norm{e_J}_{\mathbb{L}^2(D)}^2\bigr ] + \tau \sum_{j=0}^{J-1} \mathbb{E}\bigl [\norm{\nabla (e_{j+1} + e_j)}_{\mathbb{L}^2(D)}^2\bigr ] \le C \tau^3.
\]
By taking square roots, we finally obtain the result of Theorem~\ref{thm:main}.
\end{proof}
\section{Stochastic Wave Equation}
\label{sec:wave}
Let \(D\subset\mathbb{R}^d\) be a smooth bounded domain. Let $(\Omega, \mathcal{F},\mathbb{F}, {\mathbb P})$ be a given probability space, and $W$ as specified in Section~\ref{section 2}. Assume $X_0\in \mathbb{H}_0^1(D)$, $X_1\in \mathbb{L}^2(D)$, $\Phi\in \bigl(\mathbb{L}^2(D)\bigr)^m$, and $A$ be from \eqref{operator} and uniformly elliptic. Then \eqref{eq:wave} admits a unique variational solution $X\in\mathbb{L}^2_{\mathbb F}\bigl(\Omega;C([0,T];\mathbb{H}_0^1(D))\bigr)$ such that $X_t\in \mathbb{L}^2_{\mathbb F}\bigl(\Omega;C([0,T];\mathbb{L}^2(D))\bigr),$
and the basic energy estimate 
\[
\mathbb{E}\bigl[\|X\|_{C([0,T];\mathbb{H}^1_0(D))}^2+\|X_t\|_{C([0,T];\mathbb{L}^2(D))}^2\bigr]
\le C\bigl(\|X_0\|_{\mathbb{H}_0^1(D)}^2+\|X_1\|_{\mathbb{L}^2(D)}^2+\|\Phi\|_{(\mathbb{L}^2(D))^m}^2\bigr)
\]
holds. Moreover, for more regular data 
$
X_0\in \mathbb{H}^2(D)\cap \mathbb{H}_0^1(D), X_1\in \mathbb{H}_0^1(D),
$ and $\Phi\in\bigl(\mathbb{H}_0^1(D)\bigr)^m$, we have
$
X\in \mathbb{L}^2_{\mathbb F}\big(\Omega;C([0,T];\mathbb{H}^2(D)\cap \mathbb{H}_0^1(D))\big),
X_t\in \mathbb{L}^2_{\mathbb F}\big(\Omega;C([0,T];\mathbb{H}_0^1(D))\big),$ and 
\[
\mathbb{E}\bigl[\|X\|_{C([0,T];\mathbb{H}^2(D))}^2+\|X_t\|_{C([0,T];\mathbb{H}_0^1(D))}^2\bigr]
\le C\bigl(\|X_0\|_{\mathbb{H}^2(D)}^2+\|X_1\|_{\mathbb{H}_0^1(D)}^2+\|\Phi\|_{(\mathbb{H}_0^1(D))^m}^2\bigr)\,;
\]
see, {\em e.g.,} \cite[Sec. 5.3]{Chow2014}.
For {\em random} PDE~\eqref{eq:wave-transformed}, if $X_0\in\mathbb{H}_0^1(D), X_1\in\mathbb{L}^2(D)$ and $\Phi\in \bigl(\mathbb{H}^2(D)\cap H_0^1(D)\bigr)^m$, then there exists the unique variational solution $u\in\mathbb{L}^2_{\mathbb{F}}(\Omega; C([0,T];\mathbb{H}_0^1(D)))$ to {\em random} PDE~\eqref{eq:wave-transformed} such that $u_t\in\mathbb{L}^2_{\mathbb{F}}(\Omega;C([0,T];\mathbb{L}^2(D)))$ , $u_{tt}\in \mathbb{L}^2_{\mathbb{F}}(\Omega;\mathbb{L}^2([0,T];\mathbb{H}^{-1}(D)))$, and the following variational formulation holds: $\mathbb{P}$-a.s., for all $\varphi\in \mathbb{H}_0^1(D)$, a.e. $s\in [0,T]$,
\begin{align*}
    \left< u_{tt}(s), \varphi\right>_{\mathcal{D}}=-\left<\nabla u(s), \nabla \varphi\right>+ \left<\int_0^s\Delta\bigl[\Phi W(\tau)\bigr]\,{\rm d}\tau,\varphi\right>,
\end{align*}
with $u(0)=X_0$ and $u_t(0)=X_1$; see, {\em e.g.,} \cite[Ch. 7, Sec. 2]{Evans2010}. Moreover, for the more regular data $X_0\in\mathbb{H}^4(D)\cap \mathbb{H}_0^1(D)$, $X_1\in\mathbb{H}^3(D)\cap\mathbb{H}_0^1(D), \Phi \in \bigl(\mathbb{H}^4(D)\cap \mathbb{H}_0^1(D)\bigr)^m$, with compatibility conditions $AX_0, AX_1\in \mathbb{H}_0^1(D)$, and $ A\Phi\in \bigl(\mathbb{H}_0^1(D)\bigr)^m$, with the help of the path-wise H\"older continuity of $A[\Phi W]$ in time \big({\em i.e.,} $A [\Phi W]\in \mathbb{L}^2(\Omega;C([0,T];\mathbb{H}_0^1(D)))$\big) we have the following regularity in time 
\[
u\in \mathbb{L}^2_{\mathbb F}\big(\Omega;C^2([0,T];\mathbb{H}^2(D))\big),\quad u_t\in \mathbb{L}^2_{\mathbb F}\big(\Omega;C^2([0,T];\mathbb{L}^2(D))\big).
\]
with the following bound
\begin{equation}\label{eq: wave-regularity}
\mathbb{E}\bigl[\|u\|_{C^2([0,T];\mathbb{H}^2(D))}^2+\|u_t\|_{C^2([0,T];\mathbb{L}^2(D)))}^2\bigr]\le C\bigl(\|X_0\|_{\mathbb{H}^4(D)}^2+\|X_1\|_{\mathbb{H}^3(D)}^2+\|\Phi\|_{(\mathbb{H}^4(D))^m}^2\bigr).\,
\end{equation}
This regularity bound may {\em e.g.} be deduced from \cite[Ch. 7, Sec. 2]{Evans2010} and \cite[Thm. 5.3.1, Thm. 5.3.2]{Pazy1983} by exploiting temporal H\"older regularity of the Wiener process on the right-hand side of \eqref{eq:wave-transformed} \footnote{Here we apply \cite[Ch. 7, Sec. 2]{Evans2010} and \cite[Thm.~5.3.1, Thm.~5.3.2]{Pazy1983} pathwise ({\em i.e.},\ $\mathbb{P}$-almost surely), and subsequently take expectation to obtain the estimate \eqref{eq: wave-regularity}.}.
\subsection{Modified Crank--Nicolson scheme for SPDE~\eqref{eq:wave}}
\label{subsec:wave-modified}
For simplicity, in this section, we take $A = -\Delta$. Our main result (Theorem~\ref{thm:wave-main-precise} below), however, remains valid for any $A$ from \eqref{operator} as long as it is uniformly elliptic with smooth coefficients. We now present a time discretization for the stochastic wave equation~\eqref{eq:wave}. Let again $0=t_0<t_1<\dots<t_N=T$ be a uniform mesh grid with step $\tau=T/N$. We follow the same basic idea used for the parabolic problem in Section~\ref{sec:heat}. By integrating the second-order equation \eqref{eq:wave-transformed} with $A=-\Delta$ on the sub-interval $[t_j,t_{j+1}]$, we get the following integral equations
\begin{equation}\label{eq:wave-integral}
\begin{aligned}
u(t_{j+1})-u(t_j) &= \int_{t_j}^{t_{j+1}}  u_t(s)\,\mathrm{d}s,\\[4pt]
u_t(t_{j+1})-u_t(t_j) &= \int_{t_j}^{t_{j+1}} \Delta u(s)\,\mathrm{d}s
+ \int_{t_j}^{t_{j+1}}\int_0^t \Delta\bigl[\Phi W(s)\bigr]\,\mathrm{d}s\,\mathrm{d}t.
\end{aligned}
\end{equation}
The three integrals on the right-hand side require a separate treatment:
the deterministic integrals $\int_{t_j}^{t_{j+1}} u_t(s)\,\mathrm{d}s,$ and $\displaystyle\int_{t_j}^{t_{j+1}} \Delta u(s)\,\mathrm{d}s$ possess Lipschitz integrands in time and so may benefit from the trapezoidal / Crank--Nicolson rule; the double integral
$
\int_{t_j}^{t_{j+1}}\Big(\int_0^t \bigl[\Delta\Phi\big]\,W(s)\,\mathrm{d}s\Big)\mathrm{d}t
$
has a non-Lipschitz integrand and will therefore be approximated by a Riemann sum on the finer mesh of sizw.

\medskip

\noindent\textbf{1. Approximation of the deterministic integral.} By trapezoidal (Crank--Nicolson) rule, we approximate$$\mathcal{Q}_{j}^{u_t}:=\int_{t_j}^{t_{j+1}}u_t(s)\,{\rm d}s\qquad\text{and}\qquad\mathcal{Q}_j^\Delta := \int_{t_j}^{t_{j+1}} \Delta u(s)\,\mathrm{d}s$$ in \eqref{eq:wave-integral} by
$$
\mathcal{I}_j^{u}:= \frac{\tau}{2}\,\bigl[u_t(t_{j+1})+u_t(t_j)\bigr]\qquad\text{and}\qquad\mathcal{I}_j^\Delta := \frac{\tau}{2}\,\Delta\bigl[u(t_{j+1})+u(t_j)\bigr],
$$
respectively.
This approximation is second order in $\tau$ provided $u$ has proper time regularity.

\medskip

\noindent\textbf{2. Approximation of the double integral with stochastic integrand.}  On each sub-interval $[t_j,t_{j+1}]$ we define the finer mesh nodes $$t_{j,\ell}=t_j+\ell\tau^2,\qquad \ell=0,\dots,M=\tau^{-1}.$$  
By Fubini's theorem on domain $\{(t,s): t_j\le t\le t_{j+1},\, 0\le s\le t\}$, we split the double integral in two parts as
\begin{align*}
\mathcal{Q}_{j+1}^W:=\int_{t_j}^{t_{j+1}}\int_{0}^{t} W(s)\,{\rm d}s\,{\rm d}t& = \int_{0}^{t_{j+1}}\left(\int_{\max\{t_j,s\}}^{t_{j+1}}\,{\rm d}t\right) W(s)\,{\rm d}s\\&= \int_{0}^{t_{j+1}}\bigl(t_{j+1}-\max\{t_j,s\}\bigr)\,W(s)\,{\rm d}s \\
&= \int_{0}^{t_j}\bigl(t_{j+1}-t_j\bigr) W(s)\,{\rm d}s
\;+\; \int_{t_j}^{t_{j+1}}\bigl(t_{j+1}-s\bigr) W(s)\,{\rm d}s \\&= \tau{\int_{0}^{t_j}W(s)\,{\rm d}s}
\;+\;\int_{t_j}^{t_{j+1}}(t_{j+1}-s)\,W(s)\,{\rm d}s
\end{align*}
By using the Riemann sum on the finer mesh we approximate $\mathcal{Q}_{j+1}^W$ by
\begin{equation}\label{eq:Jtilde}
\mathcal{I}^W_{j+1}
:= \sum_{m=0}^{j-1}\sum_{\ell=1}^{\mathrm{M}} \tau^3\,\,W(t_{m,\ell})
\;+\; \sum_{\ell=1}^{\mathrm{M}} \big(t_{j+1}-t_{j,\ell}\big)\,\tau^2\,W(t_{j,\ell}).
\end{equation}
Let $(U_j,V_j)$ denote approximations to $(u(t_j), u_t(t_j))$.  Motivated by \eqref{eq:wave-integral} and the above quadratures we employ the following implicit Crank--Nicolson for {\em random} PDE~\eqref{eq:wave},
\begin{equation}\label{eq:wave-scheme-random}
\begin{cases}
U_{j+1}-U_j = \dfrac{\tau}{2}\,\bigl[V_{j+1}+V_j\bigr],\\[8pt]
V_{j+1}-V_j = \frac{\tau}{2}\,\Delta\,[U_{j+1}+U_j] \;+\sum_{m=0}^{j-1}\sum_{\ell=1}^{\mathrm{M}} \tau^3\,\Delta\bigl[\Phi\,W(t_{m,\ell})\bigr]
\;+\; \sum_{\ell=1}^{\mathrm{M}} \big(t_{j+1}-t_{j,\ell}\big)\,\tau^2\,\Delta\bigl[\Phi\,W(t_{j,\ell})\bigr],
\end{cases}
\end{equation}
$j=0,\dots,N-1,$ with $U_0=X_0$, $V_0=X_1$ and homogeneous Dirichlet boundary conditions on $U_j$ for all $j$.

\noindent
As in the stochastic heat equation case in Section \ref{sec:heat}, we reconstruct approximations for SPDE~\eqref{eq:wave-transformed} by reverting transformation~\eqref{today11} on a discrete level,
\begin{align}\label{today10-1}
X_j := U_j + \sum_{m=0}^{j-1}\sum_{\ell=1}^{\mathrm{M}}\tau^2\,\Phi W(t_{m,\ell}),\qquad Y_j := V_j + \Phi W(t_j).
\end{align}
By inserting these into \eqref{eq:wave-scheme-random} we herewith arrive a new {\em modified Crank-Nicolson scheme} to compute $\{(X_{j},Y_j);\,0\le j\le N\}$ to approximate $(X, X_t)$ with solution $X$ to SPDE~\eqref{eq:wave} --- which is summarized in the following scheme.
\medskip
\begin{algorithm}[H]
\caption{modified {\em Crank--Nicolson scheme} for SPDE~\eqref{eq:wave} with $A=-\Delta$}
\label{alg:wave-modified-detailed}
\begin{algorithmic}[1]
\State \textbf{Inputs:} domain $D$, operator $A=-\Delta$, noise coefficient $\Phi$, final time $T$, coarse steps $N$, $\tau=T/N$, initial data $X_0,X_1$, Wiener process $W$.
\State \textbf{Coarse grid:} $t_j=j\tau,\ j=0,\dots,N$.
\State \textbf{Microgrid:} $t_{j,\ell}=t_j+\ell\tau^2$, $\ell=0,\dots,\mathrm{M}$, $\mathrm{M}=\tau^{-1}$.
\State \textbf{Initialization:} set $X_0, X_1\in\mathbb{H}_0^1(D)$.
\For{$j=0$ \textbf{to} $N-1$}
\State Compute the Wiener increment: $\Delta_{j+1}W = W(t_{j+1}) - W(t_j)$.
    \State Compute correction terms:
\begin{align*}
\mathfrak{E}_{1,j}(W)
&:= \; \sum_{\ell=1}^{\rm M}\tau^2\,\Phi W(t_{j,\ell}) \;-\; \frac{\tau}{2}\,\Phi\bigl[W(t_{j+1})+W(t_j)\bigr],\\[6pt]
\label{eq:E2-wave}
\mathfrak{E}_{2,j}^\Delta(W)
&:= \frac{1}{2}\sum_{\ell=1}^{\mathrm{M}} \big(2t_{j+1}-\tau-2t_{j,\ell}\big)\,\tau^2\,\Delta\bigl[\Phi W(t_{j,\ell})\bigr].
\end{align*}
\afterpage{\clearpage}
\State Solve the following {\em modified Crank-Nicolson scheme} for $\bigl(X_{j+1},Y_{j+1}\bigr)\in \mathbb{H}_0^1(D)\times\mathbb{H}_0^{1}(D)$:
\begin{align*}
\begin{cases}
X_{j+1}-X_j
&= \dfrac{\tau}{2}\,\bigl[Y_{j+1}+Y_j\bigr] \;+\; \mathfrak{E}_{1,j}(W),\\[8pt]
Y_{j+1}-Y_j
&= \dfrac{\tau}{2}\,\Delta\,\big[X_{j+1}+X_j\big] \;+\; \Phi\Delta_{j+1}W
\;+\; \mathfrak{E}_{2,j}^\Delta(W).
\end{cases}
\end{align*}
\EndFor
\State \textbf{Output:} $\{(X_j,Y_j);\,0\le j\le N\}$.
\end{algorithmic}
\end{algorithm}
Note that the correction terms $\mathfrak{E}_{1,j}( W)$ and $\mathfrak{E}_{2,j}^\Delta(W)$ in the {\em modified Crank--Nicolson} Scheme~\ref{alg:wave-modified-detailed} play a key role in improving the overall convergence rate. The following theorem settles ${\mathcal O}(\tau^{2})$ convergence for the {\em modified Crank--Nicolson} Scheme~\eqref{alg:wave-modified-detailed}.
\medskip
\begin{theorem}[second main result]\label{thm:wave-main-precise}
Let $X_0\in\mathbb{H}^4(D)\cap \mathbb{H}_0^1(D)$, $X_1\in\mathbb{H}^3(D)\cap\mathbb{H}_0^1(D), \Phi \in \bigl(\mathbb{H}^4(D)\cap \mathbb{H}_0^1(D)\bigr)^m$, with compatibility conditions $AX_0, AX_1\in \mathbb{H}_0^1(D)$, and $ A\Phi\in \bigl(\mathbb{H}_0^1(D)\bigr)^m$ hold. Let $\{(X_j,Y_j)\}_{j=0}^N$ be generated by the {\em modified Crank-Nicolson} Scheme~\ref{alg:wave-modified-detailed} and $X$ solves SPDE~\eqref{eq:wave} with $A=-\Delta$. Then there exists a constant $C=C(T, X_0, X_1, \Phi, m)>0$, independent of $\tau$, such that

\begin{equation}\label{today16}
\max_{0\le j\le N}
\Big\{
\E\big[\|X(t_j)-X_j\|_{\mathbb{H}^1_0(D)}^2\big]^{1/2}
\;+\;
\E\big[\|X_t(t_j)-Y_j\|_{\mathbb{L}^2(D)}^2\big]^{1/2}
\Big\}
\le C\,\tau^{2}.
\end{equation}

\end{theorem}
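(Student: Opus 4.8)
The plan is to carry out the whole analysis on the random-PDE scheme \eqref{eq:wave-scheme-random} for the pair $(U_j,V_j)$ and only transfer to $(X_j,Y_j)$ at the very end via the discrete reversion \eqref{today10-1}. Write $p_j:=u(t_j)-U_j$ and $q_j:=u_t(t_j)-V_j$. Subtracting \eqref{eq:wave-scheme-random} from the exact integral relations \eqref{eq:wave-integral}, and separating the applied quadratures into their exact values plus truncation errors, I would first establish the coupled error recursion
\begin{align*}
p_{j+1}-p_j &= \tfrac{\tau}{2}\bigl(q_{j+1}+q_j\bigr)+\rho^{u_t}_j,\\
q_{j+1}-q_j &= \tfrac{\tau}{2}\Delta\bigl(p_{j+1}+p_j\bigr)+\rho^{\Delta}_j+\Delta\Phi\,\mathcal{J}^W_{j+1},
\end{align*}
where $\rho^{u_t}_j:=\mathcal{Q}^{u_t}_j-\tfrac{\tau}{2}[u_t(t_{j+1})+u_t(t_j)]$ and $\rho^{\Delta}_j:=\mathcal{Q}^{\Delta}_j-\tfrac{\tau}{2}\Delta[u(t_{j+1})+u(t_j)]$ are the trapezoidal truncation errors of $u_t$ and $\Delta u$, and $\mathcal{J}^W_{j+1}:=\mathcal{Q}^W_{j+1}-\mathcal{I}^W_{j+1}$ is the quadrature error of the double stochastic integral, with $\mathcal{Q}^W_{j+1}$ and $\mathcal{I}^W_{j+1}$ as in \eqref{eq:Jtilde}.

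Next I would set up the discrete wave energy $\mathcal{E}_j:=\norm{\nabla p_j}_{\mathbb{L}^2(D)}^2+\norm{q_j}_{\mathbb{L}^2(D)}^2$. Testing the second recursion with $q_{j+1}+q_j\in\mathbb{H}^1_0(D)$, using $\tfrac{\tau}{2}\langle\Delta(p_{j+1}+p_j),q_{j+1}+q_j\rangle=-\tfrac{\tau}{2}\langle\nabla(p_{j+1}+p_j),\nabla(q_{j+1}+q_j)\rangle$ and substituting $\tfrac{\tau}{2}\nabla(q_{j+1}+q_j)=\nabla(p_{j+1}-p_j)-\nabla\rho^{u_t}_j$ from the first recursion, the energy-conserving cross terms cancel and I obtain the clean identity
\[
\mathcal{E}_{j+1}-\mathcal{E}_j=\langle\nabla\rho^{u_t}_j,\nabla(p_{j+1}+p_j)\rangle+\langle\rho^{\Delta}_j+\Delta\Phi\,\mathcal{J}^W_{j+1},\,q_{j+1}+q_j\rangle.
\]
Applying Young's inequality with a factor $\tau$ to each forcing term, taking expectations, summing over $j$ with $\mathcal{E}_0=0$, and absorbing the $\tau\sum(\E\mathcal{E}_{j+1}+\E\mathcal{E}_j)$ terms via a discrete Gronwall argument (valid for $\tau$ small), I would reduce the estimate to bounding the three consistency sums $\tfrac{1}{\tau}\sum_j\E\norm{\nabla\rho^{u_t}_j}^2$, $\tfrac{1}{\tau}\sum_j\E\norm{\rho^{\Delta}_j}^2$, and $\tfrac{1}{\tau}\sum_j\E\norm{\Delta\Phi\,\mathcal{J}^W_{j+1}}^2$, each of which must be shown to be $\Oh(\tau^4)$.

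For the trapezoidal errors I would invoke Lemma~\ref{lem:mean-value-fractional-vector} with $\gamma=1$: from $\Delta u\in C^{1,1}([0,T];\mathbb{L}^2(\Omega;\mathbb{L}^2(D)))$ one gets $\E\norm{\rho^{\Delta}_j}_{\mathbb{L}^2(D)}^2\le C\tau^6$, so the corresponding sum is $\tfrac{1}{\tau}\cdot N\cdot C\tau^6=C\tau^4$ since $N=T/\tau$. For the stochastic term I would split $\mathcal{J}^W_{j+1}=\tau\mathcal{R}_j+\mathcal{S}_j$, where $\mathcal{R}_j$ is the Riemann-sum error of $\int_0^{t_j}W$ on the $\tau^2$-microgrid and $\mathcal{S}_j$ the weighted error of $\int_{t_j}^{t_{j+1}}(t_{j+1}-s)W(s)\dt$ on $[t_j,t_{j+1}]$; using the covariance identity \eqref{covariance} together with independence of Wiener increments on disjoint micro-intervals — exactly as in Step~3 of the proof of Theorem~\ref{thm:main} — I expect $\E\norm{\mathcal{R}_j}_{\mathbb{R}^m}^2\le C\tau^4$ and $\E\norm{\mathcal{S}_j}_{\mathbb{R}^m}^2\le C\tau^6$, whence $\E\norm{\mathcal{J}^W_{j+1}}_{\mathbb{R}^m}^2\le C\tau^6$ and the sum is again $\Oh(\tau^4)$. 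This yields $\sup_{0\le J\le N}\E[\mathcal{E}_J]\le C\tau^4$. Finally I would revert: by \eqref{today10-1} one has $X_t(t_j)-Y_j=q_j$ and $X(t_j)-X_j=p_j+\Phi\,\mathcal{R}_j$, and since $\E\norm{\mathcal{R}_j}_{\mathbb{R}^m}^2\le C\tau^4$, taking square roots and combining with $\E[\mathcal{E}_J]\le C\tau^4$ delivers \eqref{today16}.

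The main obstacle is the first forcing term $\langle\nabla\rho^{u_t}_j,\nabla(p_{j+1}+p_j)\rangle$: the energy structure forces the velocity trapezoidal error to be measured in $\mathbb{H}^1_0(D)$, so to keep the rate at $\tau^2$ I need the sharp bound $\E\norm{\nabla\rho^{u_t}_j}_{\mathbb{L}^2(D)}^2=\Oh(\tau^6)$, i.e. $u_t\in C^{1,1}([0,T];\mathbb{L}^2(\Omega;\mathbb{H}^1_0(D)))$, equivalently $u_{ttt}\in L^\infty(0,T;\mathbb{H}^1_0(D))$ pathwise. Merely interpolating the literally-stated regularity \eqref{eq: wave-regularity}, namely $u\in C^2(\mathbb{H}^2)$ and $u_t\in C^2(\mathbb{L}^2)$, gives only $u_{tt}\in C^{0,1/2}(\mathbb{H}^1_0)$, hence $\norm{\nabla\rho^{u_t}_j}\le C\tau^{5/2}$ and a degraded rate $\tau^{3/2}$. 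Establishing the required $\mathbb{H}^1$-regularity of $u_{ttt}$ is therefore the crux, and this is precisely where the strong data assumptions $X_0\in\mathbb{H}^4$, $X_1\in\mathbb{H}^3$, $\Phi\in\mathbb{H}^4$ with the compatibility conditions enter: in the Duhamel representation of $u_t$ an integration by parts moves one time-derivative off the oscillatory kernel onto the forcing $\partial_t\bigl(\int_0^{\cdot}A[\Phi W]\bigr)=A\Phi\,W$, trading the rough factor $W$ for the integrable $\int_0^{\cdot}W$ and leaving the spatially regular coefficient $\Delta\Phi\in\mathbb{H}^2(D)$, which yields $u_t\in C([0,T];\mathbb{H}^3(D))$ and thus the needed $\mathbb{H}^1_0$-control of $\rho^{u_t}_j$.
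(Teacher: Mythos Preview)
Your approach is correct and coincides with the paper's proof in all essentials: same error variables $(p_j,q_j)=(e_{1,j},e_{2,j})$, same discrete energy $\|\nabla p_j\|^2+\|q_j\|^2$, same three consistency sums bounded via Lemma~\ref{lem:mean-value-fractional-vector} with $\gamma=1$ and the micro-grid covariance computation (your two-term split $\tau\mathcal{R}_j+\mathcal{S}_j$ is the paper's $\mathcal{J}_j^{\mathrm{old}}+\mathcal{J}_j^{\mathrm{local}}$, the latter being further decomposed there into $\mathcal{J}_j^{\mathrm{curr}}+\mathcal{S}_j^W$), same Gronwall step, and same reversion to $(X_j,Y_j)$ via \eqref{today10-1}. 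The regularity obstacle you flag for $\nabla\rho^{u_t}_j$ is genuine, but the paper does not argue for it separately: it simply asserts $u_t\in C^2([0,T];\mathbb{L}^2(\Omega;\mathbb{H}^1(D)))$ in \eqref{0909} (the $\mathbb{L}^2$-only norm displayed in \eqref{eq: wave-regularity} understates what is actually used in Step~2 of the proof) and applies Lemma~\ref{lem:mean-value-fractional-vector} with $\mathrm{B}=\mathbb{L}^2(\Omega;\mathbb{H}^1(D))$ and $\gamma=1$ directly, citing \cite{Evans2010,Pazy1983} for the pathwise regularity under the stated data and compatibility conditions.
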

\begin{proof} We prove this result with the help of the modified Crank--Nicolson scheme~\eqref{eq:wave-scheme-random}  applied to the \emph{random} PDE~\eqref{eq:wave-transformed}. The proof proceeds through the following steps

\noindent
\textbf{Step 1. Error inequality:}
We define the errors
\[
e_{1,j}= u(t_j)-U_j,\qquad e_{2,j} = u_t(t_j)-V_j.
\]
Therefore from \eqref{eq:wave-integral} and \eqref{eq:wave-scheme-random} we get the following discrete error system
\begin{subequations}\label{eq:error-system-mod}
\begin{align}
e_{1, j+1}-e_{1,j} &= \frac{\tau}{2}\bigl[e_{2,j+1}+e_{2, j}\bigr] + \tau\mathcal{Q}^{u_t}_{j+1/2}, \label{eq:error-U-mod}\\
{e_{2,j+1}-e_{2,j}} &= \frac{\tau}{2}\Delta\bigl[e_{1,j+1}+e_{1,j}\bigr] + \tau\Delta\mathcal{Q}^u_{j+1/2} + \Delta\bigl[\Phi(\mathcal{Q}_{j+1}^W-\mathcal{I}_{j+1}^W)\bigr],\label{eq:error-V-mod}
\end{align}
\end{subequations}
where $$\mathcal{Q}^u_{j+1/2}=\frac{1}{\tau}\int_{t_{j}}^{t_{j+1}}u(s)\,{\rm d}s-\frac{1}{2}\bigl[u(t_j)+u(t_{j+1})\bigr],$$
$$\mathcal{Q}^{u_t}_{j+1/2}=\frac{1}{\tau}\int_{t_{j}}^{t_{j+1}}u_t(s)\,{\rm d}s-\frac{1}{2}\bigl[u_t(t_j)+u_t(t_{j+1})\bigr].$$
By taking the $\mathbb{L}^2(D)$-inner product of \eqref{eq:error-U-mod} with $-\Delta e_{1,j+1/2},$ where $ e_{1,j+1/2}:=\frac{1}{2}{\bigl[e_{1, j+1}+e_{1,j}\bigr]}$ and \eqref{eq:error-V-mod} with $e_{2,j+1/2}:=\frac{1}{2}\bigl[e_{2,j+1}+e_{2,j}\bigr]$, then we add the both identities to get the error identity
\begin{align*}
&\frac{1}{2}\bigl(\|\nabla e_{1,j}\|_{\mathbb{L}^2(D)}^2-\|\nabla e_{1,j}\|_{\mathbb{L}^2(D)}^2\bigr)+\frac{1}{2}\bigl(\|e_{2,j}\|_{\mathbb{L}^2(D)}^2-\|e_{2,j}\|_{\mathbb{L}^2(D)}^2\bigr)\\&={\tau}\left<\nabla\mathcal{Q}_{j+1/2}^{u_t},\nabla e_{1,j+1/2}\right>+\tau\left<\Delta\mathcal{Q}_{j+1/2}^u, e_{2,j+1/2}\right>-\tau\left<\frac{1}{\tau}\nabla\bigl[\Phi\bigl(\mathcal{Q}_{j+1/2}^W-\mathcal{I}_{j+1}^W\bigr)\bigr], \nabla e_{2,j+1/2}\right>.
\end{align*}
For any $1\le J\le N$, by applying Young's inequality, taking expectation, and summing over all steps from $0$ to $J-1$, we get the error inequality
\begin{align}\label{today08-1}
    \E\bigl[\|\nabla e_{1,J}\|_{\mathbb{L}^2(D)}^2\bigr]+\E\bigl[\|e_{2,J}\|_{\mathbb{L}^2(D)}^2\bigr]&\le\E\bigl[\|\nabla e_{1,0}\|_{\mathbb{L}^2(D)}^2\bigr]+\E\bigl[\|e_{2,0}\|_{\mathbb{L}^2(D)}^2\bigr]\notag\\&\qquad+C\,\tau\sum_{j=0}^{J-1}\bigl(\E\bigl[\|\nabla e_{1,j}\|_{\mathbb{L}^2(D)}^2\bigr]+\E\bigl[\|e_{2,j}\|_{\mathbb{L}^2(D)}^2\bigr]\bigr)\notag\\&\qquad+\tau \sum_{j=0}^{J-1}\E\bigl[\|\Delta\mathcal{Q}_{j+1/2}^u\|_{\mathbb{L}^2(D)}^2\bigr]+\tau \sum_{j=0}^{J-1}\E\bigl[\|\nabla\mathcal{Q}_{j+1/2}^{u_t}\|_{\mathbb{L}^2(D)}^2\bigr]\notag\\&\qquad+\frac{1}{\tau}\sum_{j=0}^{J-1}\E\bigl[\bigl\|\nabla\bigl[\Phi\bigl(\mathcal{Q}_{j+1}^W-\mathcal{I}_{j+1}^W\bigr)\bigr]\bigr\|_{\mathbb{L}^2(D)}^2\bigr].
\end{align}

\noindent
\textbf{Step 2. Trapezoidal rule truncation error:} We apply Lemma~\ref{lem:mean-value-fractional-vector} for $f=u,a=t_j, \kappa=\tau, \mathrm{B}=\mathbb{L}^2(\Omega;\mathbb{H}^2(D))$ and $\gamma=1$ to get 
\[
\norm{\mathcal{Q}^{u}_{j+1/2}}_{\mathbb{L}^2(\Omega;\mathbb{H}^2(D))}\le \widetilde{C}\tau^{2},
\]
where $\widetilde{C}=\frac{4}{35}\|u\|_{C^{2}([0,T];\mathbb{L}^2(\Omega;\mathbb{H}^{2}(D)))}$ which is finite due the estimate~\eqref{eq: wave-regularity}. Finally, we get
\begin{align}\label{today09-1}
\tau \sum_{j=0}^{J-1} \mathbb{E}\bigl [\norm{\Delta\mathcal{Q}^{u}_{j+1/2}}_{\mathbb{L}^2(D)}^2\bigr ]\le C\tau^4,
\end{align}
where the constant depends on the initial data $X_0$ and the noise coefficients $\Phi$; see \eqref{eq: wave-regularity}. Again we apply Lemma~\ref{lem:mean-value-fractional-vector} for $f=u_t,a=t_j, \kappa=\tau, \mathrm{B}=\mathbb{L}^2(\Omega;\mathbb{H}^1(D))$ and $\gamma=1$ to get 
\[
\norm{\mathcal{Q}^{u_t}_{j+1/2}}_{\mathbb{L}^2(\Omega;\mathbb{H}^1(D))}\le \widetilde{C}\tau^{2},
\]
where $\widetilde{C}=\frac{4}{35}\|u_t\|_{C^{2}([0,T];\mathbb{L}^2(\Omega;\mathbb{H}^1(D))})$ which is finite due the estimate~\eqref{eq: wave-regularity}. Finally, we get
\begin{align}\label{today09-2}
\tau \sum_{j=0}^{J-1} \mathbb{E}\bigl [\norm{\nabla\mathcal{Q}^{u}_{j+1/2}}_{\mathbb{L}^2(D)}^2\bigr ]\le C\tau^4,
\end{align}
where the constant depends on the initial data $X_0$ and the noise coefficients $\Phi$; see \eqref{eq: wave-regularity}.

\noindent
\textbf{Step 3. Micro-step error:}
In this step, we prove the following estimate
\begin{equation}
\label{eq:double-err-corrected-local}
\sum_{j=0}^{J}\mathbb{E}\bigl[\|\mathcal{Q}_{j+1}^W - \mathcal{I}_{j+1}^W\|_{\mathbb{R}^m}^2\bigr] \le C \tau^5.
\end{equation}
For convenience we define
\[
\mathcal{J}_j^W := \mathcal{Q}_{j+1}^W - \mathcal{I}_{j+1}^W = \mathcal{J}_j^{\mathrm{old}}+\mathcal{J}_j^{\mathrm{local}},
\]
where
\begin{align}\label{today12}
\mathcal{J}_j^{\mathrm{old}} &:= \tau\int_0^{t_j} W(s)\,{\rm d}s \;-\;\tau\sum_{m=1}^{j-1}\sum_{\ell=1}^{\mathrm{M}} \tau^2\,W(t_{m,\ell})
= \tau\sum_{m=1}^{j-1}\sum_{\ell=1}^{\mathrm{M}}\int_{t_{m-1,\ell}}^{t_{m,\ell}}\bigl( W(s)-W(t_{m,\ell})\bigr)\,{\rm d}s,\\[4pt]
\mathcal{J}_j^{\mathrm{local}}
&:= \int_{t_j}^{t_{j+1}}(t_{j+1}-s)\,W(s)\,{\rm d}s
- \sum_{\ell=1}^{\mathrm{M}}(t_{j+1}-t_{j,\ell})\,\tau^2\,W(t_{j,\ell})\notag\\[4pt]
&= \sum_{\ell=1}^{\mathrm{M}}\Bigg(\int_{t_{j,\ell-1}}^{t_{j,\ell}}(t_{j+1}-s)\,W(s)\,{\rm d}s
-(t_{j+1}-t_{j,\ell})\,W(t_{j,\ell})\Bigg).
\end{align}
For further simplification of the term $\mathcal{J}_j^{\mathrm{local}}:=\sum_{\ell=1}^{\mathrm{M}}\mathcal{J}_{j,\ell}^{\mathrm{local}}$, we split \(W(s)=W(t_{j,\ell})+(W(s)-W(t_{j,\ell}))\) inside the integral to obtain
\[
\begin{aligned}
\mathcal{J}_{j,\ell}^{\mathrm{local}}
&= \int_{t_{j,\ell-1}}^{t_{j,\ell}}(t_{j+1}-s)\bigl(W(s)-W(t_{j,\ell})\bigr)\,{\rm d}s
+ W(t_{j,\ell})\int_{t_{j,\ell-1}}^{t_{j,\ell}}\bigl((t_{j+1}-s)-(t_{j+1}-t_{j,\ell})\bigr)\,{\rm d}s\\[4pt]
&= \int_{t_{j,\ell-1}}^{t_{j,\ell}}(t_{j+1}-s)\bigl(W(s)-W(t_{j,\ell})\bigr)\,{\rm d}s
+ W(t_{j,\ell})\int_{t_{j,\ell-1}}^{t_{j,\ell}}(t_{j,\ell}-s)\,{\rm d}s\\[4pt]
&= \int_{t_{j,\ell-1}}^{t_{j,\ell}}(t_{j+1}-s)\bigl(W(s)-W(t_{j,\ell})\bigr)\,{\rm d}s
+ \frac{\tau^4}{2}\,W(t_{j,\ell}).
\end{aligned}
\]
Then after taking sum over $\ell$ from $0$ to $\mathrm{M}$, it gives $$\mathcal{J}_{j}^{\mathrm{local}}:=\mathcal{J}_{j}^{\mathrm{curr}}+\mathcal{S}^W_{j},$$
where 
\begin{align*}
\mathcal{J}_{j}^{\mathrm{curr}}
:&=\sum_{\ell=1}^{\mathrm{M}}\int_{t_{j,\ell-1}}^{t_{j,\ell}}(t_{j+1}-s)\bigl(W(s)-W(t_{j,\ell})\bigr)\,{\rm d}s,\\[4pt]
\mathcal{S}^W_{j}&:=\sum_{\ell=1}^{\mathrm{M}}\frac{\tau^4}{2}\, W(t_{j,\ell}).
\end{align*}
Hence the exact decomposition is
\begin{equation}\label{eq:Ejcurr-decomp}
\mathcal{J}_j^W=\mathcal{J}_j^{\mathrm{old}}+\mathcal{J}_j^{\mathrm{curr}}+\mathcal{S}_{j}^W.
\end{equation}

\noindent
\textbf{Step 3.1. Estimate of $\mathcal{S}_{j}^W$:}
We now estimate the discrete term $\mathcal{S}_j^W$. We obtain
\[
\begin{aligned}
\E\bigl[\|\mathcal{S}^W_j\|_{\mathbb{R}^m}^2\bigr]
&= \frac{\tau^8}{4}
\E\bigg[\Bigl\|\sum_{\ell=1}^{\mathrm{M}}W(t_{j,\ell})\Bigr\|_{\mathbb{R}^m}^2\bigg]
= \frac{\tau^8}{4} \sum_{\ell,\ell'=1}^{\mathrm{M}}\E\bigl\langle W(t_{j,\ell}),W(t_{j,\ell'})\bigr\rangle_{\mathbb{R}^m
}\\[4pt]
&= \frac{m\tau^8}{4} \sum_{\ell,\ell'=1}^{\mathrm{M}}\min\{t_{j,\ell},t_{j,\ell'}\}.
\end{aligned}
\]
Since \(t_{j,p},t_{j,\ell'}\in[t_j,t_{j+1}]\) and \(\min\{t_{j,\ell},t_{j,\ell'}\}\le t_{j+1}\le T\), we obtain
\[
\sum_{\ell, \ell'=1}^{\mathrm{M}}\min\{t_{j,\ell},t_{j,\ell'}\}\le \mathrm{M}^2\,T.
\]
By using \(\mathrm{M}=\tau^{-1}\), this gives
\begin{align}\label{today01,w}
\E\bigl[\|\mathcal{S}^W_j\|_{\mathbb{R}^m}^2\bigr] \le m\tau^8\tau^{-2}T=  mT\tau^6.
\end{align}

\medskip\noindent
\textbf{Step 3.2. Estimate of $\mathcal{J}_{j}^{\mathrm{old}}$:}
We fix one micro interval $[t_{m-1,\ell},t_{m,\ell}]=[a,b]$ of length $\tau^2$ and set
\[
I_m := \int_{a}^{b}\bigl(W(s)-W({b})\bigr)\,{\rm d}s.
\]
For $s,r\in[a,b]$, the covariance identity gives
\[
\mathbb{E}\bigl [\bigl\langle W(s)-W(b),\,W(r)-W(b)\bigr\rangle\bigr ]
= \bigl(b-\max\{s,r\}\bigr)\,{I}.
\]
Hence
\[
\begin{aligned}
\mathbb{E}\bigl[\|I_m\|_{\mathbb{R}^m}^2\bigr]
&= m\int_{a}^{b}\int_{a}^{b} \bigl(b-\max\{s,r\}\bigr)\,{\rm d}s\,{\rm d}r= m\int_0^{\tau^2}\int_0^{\tau^2}\min\{x,y\}\,{\rm d}x\,{\rm d}y\le \frac{\tau^6m}{3},
\end{aligned}
\]
where in the second line we used the change of variables $x=b-s,\;y=b-r$. By summing over the disjoint micro intervals before $t_j$ and by independence of Wiener increments on micro intervals, we yield
\[
\mathbb{E}\bigl[\|\mathcal{J}_j^{\mathrm{old}}\|_{\mathbb{R}^m}^2\bigr]
= \tau^2\sum_{m=1}^{j-1}\sum_{\ell=1}^{\mathrm{M}}\mathbb{E}\bigl[\|I_{m,\ell}\|_{\mathbb{R}^m}^2\bigr]
\le \frac{(t_j/\tau) m\,\mathrm{M},\tau^8}{3}
=\frac{t_j m\tau^5}{3}\le \frac{T m\tau^5}{3},
\]
where the facts $j-1\le {t_j}/\tau$, $t_j\le T$ and $\mathrm{M}=\tau^{-1}$ are used.
Finally we obtain
\begin{equation}\label{est:old}
\mathbb{E}\bigl[\|\mathcal{J}_j^{\mathrm{old}}\|_{\mathbb{R}^m}^2\bigr] \le C\tau^{6}.
\end{equation}
\medskip\noindent
\textbf{Step 3.3. Estimate of $\mathcal{J}_j^{\mathrm{curr}}$:} For a micro interval $[t_{j,\ell-1},t_{j,\ell}]\subset[t_j,t_{j+1}]$, we define
\[
I_{j,\ell} := \int_{t_{j,\ell-1}}^{t_{j,\ell}}(t_{j+1}-s)\bigl(W(s)-W(t_{j,\ell})\bigr)\,{\rm d}s.
\]
Then, similarly as previous in the sub-step, we compute
\[
\mathbb{E}\bigl [\|I_{j,\ell}\|_{\mathbb{R}^m}^2\bigr ]
= m\int_{t_{j,\ell-1}}^{t_{j,\ell}}\int_{t_{j,\ell-1}}^{t_{j,\ell}}
(t_{j+1}-s)(t_{j+1}-r)\bigl(t_{j,\ell}-\max\{s,r\}\bigr)\,{\rm d}s\,{\rm d}r.
\]
For $s,r\in[t_{j,\ell-1},t_{j,\ell}]$ we have $0\le t_{j+1}-s\le \tau$ and $0\le t_{j,\ell}-\max\{s,r\}\le \tau^2$. Therefore we get
\[
\mathbb{E}\bigl [\|I_{j,\ell}\|_{\mathbb{R}^m}^2\bigr ] \le m\tau^4\int_{t_{j,\ell-1}}^{t_{j,\ell}}\int_{t_{j,\ell-1}}^{t_{j,\ell}} \,{\rm d}s\,{\rm d}r
= m\tau^8.
\]
Hence
\[
\begin{aligned}
\mathbb{E}\bigl[\|\mathcal{J}_j^{\mathrm{curr}}\|_{\mathbb{R}^m}^2\bigr]
&= \sum_{\ell=1}^{\mathrm{M}}\mathbb{E}\bigl[\|I_{j,\ell}\|_{\mathbb{R}^m}^2\bigr]
\le m\mathrm{M}\;\tau^7.
\end{aligned}
\]
By using $\mathrm{M}=\tau^{-1}$, we obtain
\begin{equation}\label{est:curr}
\mathbb{E}\bigl[\|\mathcal{J}_j^{\mathrm{curr}}\|_{\mathbb{R}^m}^2\bigr] \le m\tau^{6}.
\end{equation}
Finally, we combine \eqref{today01,w}-- \eqref{est:curr} to get \eqref{eq:double-err-corrected-local}.

\medskip\noindent
\textbf{Step 4. Rate of convergence for {\em random} PDE~\eqref{eq:wave-transformed}:} By making use of the estimates \eqref{today09-1}-\eqref{eq:double-err-corrected-local} and $e_{1,0}=e_{2,0}=0$, from the energy inequality~\eqref{today08-1}, we get
\begin{align}\label{today08-11}
    \E\bigl[\|\nabla e_{1,J}\|_{\mathbb{L}^2(D)}^2\bigr]&+\E\bigl[\|e_{2,J}\|_{\mathbb{L}^2(D)}^2\bigr]\le\tau\sum_{j=0}^{J-1}\bigl(\E\bigl[\|\nabla e_{1,j}\|_{\mathbb{L}^2(D)}^2\bigr]+\E\bigl[\|e_{2,j}\|_{\mathbb{L}^2(D)}^2\bigr]\bigr)+C\tau^4.
\end{align}
We now apply discrete Gronwall's inequality to get
\begin{align}\label{today13}
\max_{0\le j\le N}
\Bigg\{
\E\big[\|\nabla\big(u(t_j)-U_j\big)\|_{\mathbb{L}^2(D)}^2\big]^{1/2}
\;+\;
\E\big[\|u_t(t_j)-V_j\|_{\mathbb{L}^2(D)}^2\big]^{1/2}
\Bigg\}
\le C\,\tau^{2}.
\end{align}
\textbf{Step 5. Rate of convergence for SPDE~\eqref{eq:wave}:}
To get the estimate~\eqref{today16}, we use the transformations \eqref{today11},~\eqref{today10-1}, and identity~\eqref{today12} to conclude that
\begin{align}
  X(t_j)-X_j&=u(t_j)-U_j + \int_0^{t_j}\Phi W(s)\,{\rm d}s-\sum_{m=1}^{j-1}\sum_{\ell=1}^{\mathrm{M}}\tau^2\Phi W(t_{m,\ell})=u(t_j)-U_j+\frac{1}{\tau}\Phi\mathcal{J}_j^{\mathrm{old}},\\ X_t(t_j)-Y_j&=u_t(t_j)-V_j.
\end{align}
It implies that 
\begin{align*}
\mathbb{E}\bigl[\|\nabla \bigl(X(t_j)-X_j\bigr)\|_{\mathbb{L}^2(D)}^2\bigr]\le\E\big[\|\nabla\big(u(t_j)-U_j\big)\|_{\mathbb{L}^2(D)}^2\big] +\frac{1}{\tau^2}\mathbb{E}\bigl[\|\mathcal{J}_j^{\mathrm{old}}\|_{\mathbb{R}^m}^2\bigr]\|\nabla\Phi\|_{(\mathbb{L}^2(D))^m}^2.
\end{align*}
By using the estimates~\eqref{est:old} and \eqref{today13}, we conclude that there exists $C>0$, independent of $j$, such that
\begin{align}\label{today14}
   \mathbb{E}\bigl[\|\nabla \bigl(X(t_j)-X_j\bigr)\|_{\mathbb{L}^2(D)}^2\bigr]\le C\tau^4.
\end{align}
Finally we combine \eqref{today13}--\eqref{today14} to get the convergence rate estimate~\eqref{today16} of this theorem.
\end{proof}

\end{document}